\numberwithin{equation}{section}
\newtheorem{theorem}{Theorem}[section]
\newtheorem{lemma}[theorem]{Lemma}
\newtheorem{proposition}[theorem]{Proposition}
\newtheorem{corollary}[theorem]{Corollary}
\theoremstyle{definition}
\newtheorem{definition}[theorem]{Definition}
\newtheorem{remark}[theorem]{Remark}
\newtheorem{example}[theorem]{Example}
\theoremstyle{plain}
\newtheorem{claim}{Claim}
\titleformat*{\section}{\large\bfseries}
\titleformat*{\subsection}{\normalsize\bfseries}
\newcommand{\bC}{\ensuremath{\mathbb{C}}}
\newcommand{\bN}{\ensuremath{\mathbb{N}}}
\newcommand{\bZ}{\ensuremath{\mathbb{Z}}}
\newcommand{\subno}[1]{\noindent\textnormal{(#1)}}
\newcommand{\supp}{\mathrm{supp }}
\newcommand{\Lie}{\mathrm{Lie }}
\newcommand{\bZz}{\ensuremath{\mathbb{Z}_2}}
\newcommand{\ep}{\ensuremath{{\bar{0}}}}
\newcommand{\op}{\ensuremath{{\bar{1}}}}
\newcommand{\cp}{\ensuremath{\mathbb{C}[\partial]}}
\newcommand{\Ind}{\mathrm{Ind }}
\newcommand{\fg}{\ensuremath{\mathfrak{g}}}
\newcommand{\cS}{\mathcal{S}}
\newcommand{\cT}{\mathcal{T}}
\newcommand{\cU}{\mathcal{U}}
\newcommand{\fq}{\ensuremath{\mathfrak{q}}}
\newcommand{\bM}{\ensuremath{\mathbb{M}}}
\newcommand{\bi}{\mathbf{i}}
\newcommand{\bj}{\mathbf{j}}
\newcommand{\bk}{\mathbf{k}}
\newcommand{\bx}{\mathbf{x}}
\newcommand{\by}{\mathbf{y}}
\newcommand{\bz}{\mathbf{z}}
\newcommand{\bw}{\mathbf{w}}
\newcommand{\bd}{\mathbf{d}}
\newcommand{\bO}{\mathbf{0}}
\newcommand{\bY}{\ensuremath{\mathbb{Y}}}
\newcommand{\vijk}{\ensuremath{v}_{\mathbf{i,j,k}}}
\newcommand{\vxyz}{\ensuremath{v}_{\mathbf{x,y,z}}}
\newcommand{\hk}{\hat{k}}
\newcommand{\hj}{\hat{j}}
\newcommand{\ci}{\check{i}}
\newcommand{\vxy}{\ensuremath{v}_{\mathbf{x,y,0}}}
\newcommand{\vx}{\ensuremath{v}_{\mathbf{x,0,0}}}
\newcommand{\ti}{\tilde{i}}
\newcommand{\tj}{\tilde{j}}
\newcommand{\tk}{\tilde{k}}
\begin{document}
\begin{center}
    \bfseries\large
		Simple restricted modules over a new Lie superalgebra extended by the Ovsienko--Roger algebra
		\footnote{Supported by the National Natural Science Foundation of China (No. 12471027) and Natural Science Foundation of Shanghai (No. 24ZR1471900).
		\\ \indent \ \ $^*$Corresponding author: Xiaoqing Yue (xiaoqingyue@tongji.edu.cn)}

    \mdseries\normalsize
    \bigskip
		Jinrong Wang, Xiaoqing Yue$^*$
		
    \footnotesize
    \smallskip
		School of Mathematical Sciences, Key Laboratory of Intelligent Computing and  Applications (Ministry of Education), Tongji University, Shanghai 200092, China
		
    \smallskip
		E-mails: 2130926@tongji.edu.cn, xiaoqingyue@tongji.edu.cn 
\end{center}
    {
    \footnotesize
    \noindent\textbf{Abstract}. 
        In this paper, we introduce a new infinite-dimensional Lie superalgebra  
    $\cS$ called the super extended Ovsienko--Roger algebra. This algebra is obtained by determining the annihilation superalgebra of the Lie conformal superalgebra $S=S_{\bar0}\oplus S_{\bar{1}}$ with $S_{\bar{0}}=\mathbb{C}[\partial]L\oplus\mathbb{C}[\partial]W$, $S_{\bar{1}}=\mathbb{C}[\partial]G$ and non-trivial $\lambda$-brackets $[L_\lambda L]=(\partial+2\lambda)L$, $[L_\lambda G]=(\partial+\lambda)G$, $[L_\lambda W]=[G_\lambda G]=\partial W$. Then we construct a class of simple restricted $\cS$-modules, which are induced from simple modules of some finite dimensional solvable Lie superalgebras under certain conditions. Moreover, we obtain the classification of simple generalized Verma modules over $\cS$ and we show that the Verma module of $\cS$ is always reducible.
		
    \smallskip
    \noindent\textit{Keywords}: super extended Ovsienko--Roger algebra, Lie conformal superalgebra, restricted module, Verma module, simple module
		
    \smallskip
    \noindent\textit{Mathematics Subject Classification (2020)}: 17B05, 17B10, 17B65, 17B68
    }

\section{Introduction}\label{sect:intro}
\hspace{1.5em}Throughout this paper, we denote by $\bC,\bZ,\bN$ and $\bZ_+$ the sets of complex numbers, integers, nonnegative integers and positive integers, respectively. All vector spaces, Lie superalgebras and Lie conformal superalgebras are over $\bC$. For a Lie algebra $\fg$, we denote by $\cU(\fg)$ the universal enveloping algebra of $\fg$. We use $V[\lambda]$ to denote the set of polynomials of $\lambda$ with coefficients in the vector space $V$. 

The study of restricted modules (or smooth modules cf. \cite{KL}) is important for the representation theory of $\bZ$ or $\frac{1}{2}\bZ$-graded Lie superalgebras. It has been shown that there exists an equivalence between the category of restricted modules of Lie superalgebras and the category of modules of their corresponding vertex superalgebras (cf. \cite{FZ,LL,LPX22,LPXZ}). Mazorchuk and Zhao in \cite{MZ} proposed a general construction of simple Virasoro modules which includes and generalizes both highest weight modules and Whittaker modules. Since then, simple restricted modules over many known Lie (super)algebras are constructed and characterized (cf. \cite{C,CHS,LPX22,LPXZ,CSX}).

The concept of Lie conformal (super)algebras was introduced by Kac in \cite{K98}, which encodes an axiomatic description of the operator product expansion of chiral fields in conformal field theory. Moreover, the theory of Lie conformal superalgebras gives us an adequate tool for the study of Lie superalgebras satisfying the locality property \cite{K97}. In the past two decades, the structure and representation theories of Lie conformal superalgerbras have been greatly developed (cf. \cite{CK,CKW,FK,FKR,K98}). 

In the process of classifying Lie conformal superalgebras of rank $(2+1)$ \cite{WY}, the authors constructed several new and interesting examples. Their corresponding infinite-dimensional Lie superalgebras were also determined via the construction of their annihilation superalgebras. In particular, one of these new Lie superalgebras can be regarded as a super extension of the $\lambda=1$ Ovsienko--Roger algebra. The Ovsienko--Roger algebra $\mathcal{L}_\lambda$ is an extension of the Virasoro algebra of its tensor density module of degree one. It arose in the work of Ovsienko and Roger \cite{OR96} on matrix analogues of the Sturm--Liouville operators \cite{MOG}. The algebra $\mathcal{L}_0$ is the twisted Heisenberg--Virasoro algebra, which plays an important role in moduli spaces of curves \cite{ADKP}. The algebra $\mathcal{L}_{-1}$ is the $W(2,2)$ algebra, or known as the $\mathfrak{bms}_3$ algebra, which is widely studied in the context of vertex operator algebras and $\mathrm{BMS/GCA}$ correspondence (cf. \cite{B,ZD}). The algebra $\mathcal{L}_1$ is a non-perfect Lie algebra and the category of its restricted modules was studied in \cite{LPX22}.

Motivated by these advances, we introduce and study a new infinite-dimensional Lie superalgebra which we call super extended Ovsienko--Roger algebra.
\begin{definition}\label{def:e}
    The \emph{super extended Ovsienko--Roger algebra} $\mathcal{S}$ is a Lie superalgebra defined over $\bC$  with basis $$\{L_m, W_m, G_r, C_1, C_2\ |\ m\in\bZ,\ r\in\bZ+\frac{1}{2}\}$$ and super-brackets:
    \begin{equation}\label{e.rel}
        \begin{split}
            &[L_m,L_n]=(n-m)L_{m+n}+\frac{m^3-m}{12}\delta_{m+n,0}C_1,\\
            &[L_m,W_n]=(m+n)W_{m+n}+\delta_{m+n,0}C_2,\\
            &[L_m,G_r]=rG_{m+r},\\
            &[G_r,G_s]=(r+s)W_{r+s}+\delta_{r+s,0}C_2,\\
            &[W_m,W_n]=[W_m,G_r]=[\mathcal{S},C_1]=[\mathcal{S},C_2]=0,
        \end{split}
    \end{equation}
    for any $m,n\in\bZ$, $r,s\in\bZ+\frac{1}{2}$, where $|L_m|=|W_m|=|C_1|=|C_2|=\ep$ and $|G_r|=\op$.
\end{definition}

Recall that a special rank $(2+1)$ Lie conformal superalgebra was introduced in \cite{WY}.
\begin{definition}\label{def:exc.Lie.con.salg}
    The special rank $(2+1)$ Lie conformal superalgebra $S=S_\ep\oplus S_\op$ is a free $\bZz$-graded $\cp$-module generated by $L, W$ and $G$ satisfying the following non-trivial $\lambda$-brackets:
    \begin{align}\label{exc.lie.con.salg}
        [L_\lambda L]=(\partial+2\lambda)L,\ \ [L_\lambda W]=\partial W,\ \ [L_\lambda G]=(\partial+\lambda)G,\ \ [G_\lambda G]=\partial W,
    \end{align}
    where $S_\ep=\cp L\oplus\cp W$ and $S_\op=\cp G$.
\end{definition}
First, by determining the annihilation superalgebra of $S$, we construct a new class of Lie superalgebras $\bar{\cS}^\epsilon$, with $\epsilon=0$ and $\epsilon=\frac{1}{2}$ corresponding to the Ramond and Neveu-Schwarz types, respectively. We then compute its central extension in detail. However, we note that this is not the universal central extension, due to the fact that the Lie superalgebra $\bar{\cS}^\epsilon$ is non-perfect. 

With these preparations in place, we obtain the Lie superalgebra $\mathcal{S}$ and then turn our focus to the study of its simple restricted modules. We construct (Theorem~\ref{thm:simple.restricted.module}) and classify simple restricted $\cS$-modules under certain conditions (Theorem~\ref{thm:isom}). We also establish a one-to-one correspondence between simple restricted $\cS$-modules and simple modules of a family of finite-dimensional solvable Lie superalgebras associated with $\cS$ (Theorem~\ref{thm:quo.alg.mod}). Using Block's conclusions in \cite{Bl}, we obtain the classification of simple generalized Verma modules over $\cS$ (Corollary~\ref{cor:gen.ver.mod}). In addition, inspired by the condition in Theorem~\ref{thm:simple.restricted.module}, we show that the Verma module over $\cS$ is always reducible.

This paper is organized as follows. In Section~\ref{sect:prelim}, we review some notations and preliminaries for Lie superalgebras and Lie conformal superalgebras. In Section~\ref{sect:lie.salg.e}, we introduce new super extended Ovsienko--Roger algebras from the view of annihilation superalgebras and central extension. In Section~\ref{sect:construction}, a class of simple restricted modules over $\cS$ are constructed. In Section~\ref{sect:char}, we give several equivalent characterizations of simple restricted $\cS$-modules. Furthermore, simple generalized Verma modules for $\cS$ are classified. At last, we present some examples of restricted $\cS$-modules.

\section{Preliminaries}\label{sect:prelim}
\hspace{1.5em}In this section, we will make some preparations of Lie superalgebras and Lie conformal superalgebras for later use.

\subsection{Lie superalgebra}

\hspace{1.5em}Let $V$ be a superspace that is a \bZz-graded linear space with a direct sum $V=V_\ep \oplus V_\op$. If $x\in V_\theta$, $\theta\in\bZz=\{\ep,\op\}$, then we say $x$ is \emph{homogeneous} and of \emph{parity $\theta$}. Define $x\in V_\ep$ is \emph{even} and $x\in V_\op$ is \emph{odd}. The parity of a homogeneous element $x$ is denoted by $|x|$. Throughout what follows, if $|x|$ occurs in an expression, then it is assumed that $x$ is homogeneous and that the expression extends to the other elements by linearity. 

Let $\mathfrak{g}=\fg_\ep\oplus\fg_\op$ be a Lie superalgebra. A $\mathfrak{g}$-\emph{module} is a $\bZz$-graded vector space $V=V_\ep\oplus V_\op$ together with a bilinear map $\mathfrak{g}\times V\rightarrow V$, denoted by $(x,v)\mapsto xv$ such that
\begin{align*}
    x(yv)-(-1)^{|x||y|}y(xv)=[x,y]v,\ \ \mathfrak{g}_{\bar{i}}V_{\bar{j}}\subseteq V_{\bar{i}+\bar{j}},
\end{align*}
for all $x,y\in\mathfrak{g}$, $v\in V$, $\bar{i},\bar{j}\in\bZz$. Hence, there is a parity-change functor $\Pi$ on the category of $\mathfrak{g}$-modules to itself. In other words, for any module $V=V_\ep\oplus V_\op$, we have a new module $\Pi(V)$ with the same underlining space under the parity change, namely $\Pi(V_\ep)=V_\op$ and $\Pi(V_\op)=V_\ep$. Throughout this paper, all modules considered in this paper are $\bZz$-graded and all simple modules are non-trivial unless specified.

\begin{definition}
    Let $V$ be a module of a Lie superalgebra $\fg$ and $x\in\fg$.

    \subno{1} If for any $v\in V$ there exists $n\in\bN$ such that $x^nv=0$, then we call that the action of $x$ on $V$ is \emph{locally nilpotent}. Similarly, the action of $\fg$ on $V$ is \emph{locally nilpotent} if for any $v\in V$ there exists $n\in\bN$ such that $\fg^n v=0$.

    \subno{2} If for any $v\in V$ we have $\dim(\sum_{n\in\bN}\bC x^nv)<+\infty$, then we call that the action of $x$ on $V$ is \emph{locally finite}. Similarly, the action of $\fg$ on $V$ is \emph{locally finite} if for any $v\in V$ we have $\dim(\sum_{n\in\bN}\bC \fg^nv)<+\infty$.
\end{definition}

\begin{remark}
    It is easy to see that the action of $x$ on $V$ is locally nilpotent implies that the action of $x$ on $V$ is locally finite. If $\fg$ is a finitely generated Lie superalgebra, then the action of $\fg$ on $V$ is locally nilpotent implies that the action of $\fg$ on $V$ is locally finite. 
\end{remark}

\subsection{Lie conformal superalgebra}
\hspace{1.5em}Now, we recall some definitions, notations and related results about the Lie conformal superalgebra. One can refer to \cite{CK,FKR,K97,K98} for more details.
\begin{definition}\label{def:lie conformal superalgebra}
    A \emph{Lie conformal superalgebra} $R=R_\ep \oplus R_\op$ is a \bZz-graded $\cp$-module with a $\bC$-linear map $R\otimes R\rightarrow \bC[\lambda]\otimes R$, $a\otimes b\mapsto [a_\lambda b]$ called $\lambda$-bracket, and satisfying the following axioms $(a,b,c\in R)$:
    \begin{align*}
        \text{(conformal sesquilinearity)}&\phantom{1234} [\partial a_\lambda b]=-\lambda[a_\lambda b],\ [a_\lambda \partial b]=(\partial+\lambda)[a_\lambda b],\\
        \text{(super skew-symmetry)}&\phantom{1234} [a_\lambda b]=-(-1)^{|a||b|}[b_{-\lambda-\partial}a],\\
        \text{(super Jacobi identity)}&\phantom{1234} [a_\lambda[b_\mu c]]=[[a_\lambda b]_{\lambda+\mu}c]+(-1)^{|a||b|}[b_\mu[a_\lambda c]].
    \end{align*}
\end{definition}

For a Lie conformal superalgebra $R=R_\ep\oplus R_\op$, $R$ is called \emph{finite} if it is finitely generated over $\bC[\partial]$. If it is further a free $\bC[\partial]$-module, then the \emph{rank} of $R$ is defined as the rank of the underlying free $\bC[\partial]$-module. Sometimes the rank is written as the sum of two numbers which means the sum of the rank of $R_\ep$ and the rank of $R_\op$.

In particular, there exists an important Lie superalgebra associated with a Lie conformal superalgebra. Suppose $R$ is a Lie conformal superalgebra. For any $a,b\in R$, we write 
\begin{align*}
    [a_\lambda b]=\sum_{j\in\bN}\lambda^{(j)}a_{(j)}b\ \text{ with }\ \lambda^{(j)}=\frac{\lambda^j}{j!}.
\end{align*}
For every $j\in\bN$, we have the \bC-linear map: $R\otimes R\rightarrow R,\ a\otimes b\mapsto a_{(j)}b$, which is called the $j$-\emph{th product}. 

\begin{definition}\label{def:annalg}
    The \textit{annihilation superalgebra} $\Lie(R)$ of a Lie conformal superalgebra $R$ is a Lie superalgebra spanned over $\bC$ by $\{a_{(n)}\ |\ a\in R,\ n\in\bN\}$ with relations
    \begin{align*}
		(\partial a)_{(n)}=-na_{(n-1)},\ \ (a+b)_{(n)}=a_{(n)}+b_{(n)},\ \ (ka)_{(n)}=ka_{(n)},
    \end{align*}
    for $a,b\in R$ and $k\in\bC$, and the Lie brackets of $\Lie(R)$ are given by
    \begin{equation}\label{ann.lie.bracket}
		[a_{(m)},b_{(n)}]=\sum_{j\in\bN}\binom{m}{j}(a_{(j)}b)_{(m+n-j)}.
    \end{equation}
    The parity $|a_{(n)}|$ of $a_{(n)}\in\Lie(R)$ is the same as $|a|$ for any homogeneous $a\in R$ and $n\in\bN$.
\end{definition}

For a Lie conformal superalgebra $R$, the annihilation superalgebra $\Lie(R)$ and the \textit{extended annihilation superalgebra} $\Lie(R)^e=\bC\partial\ltimes\Lie(R)$ with $[\partial,a_{(n)}]=-na_{(n-1)}$ play crucial roles in the representations of Lie conformal superalgebras (cf. \cite{CK}).

\begin{example}
    The Neveu--Schwarz Lie conformal superalgebra is a free \bZz-graded $\bC[\partial]$-module with a basis $\{L,\, G\}$ satisfying the following relations:
    \begin{align*}
    [L_\lambda L]=(\partial+2\lambda)L,\ \ [L_\lambda G]=(\partial+\frac32\lambda)G,\ \ [G_\lambda G]=2L.
\end{align*}
   It is an important example of the Lie conformal superalgebra introduced in \cite{CK} firstly, where its finite nontrivial irreducible conformal modules were studied. Actually, the Neveu--Schwarz Lie conformal superalgebra corresponds to the super-Virasoro algebra (see \cite{CKW} for details), also called the $N=1$ superconformal algebra (cf. \cite{BMRW,K98,NS,Ra}). The (centreless) super-Virasoro algebras are the Lie superalgebras $\mathrm{SVir}_\epsilon=\bigoplus_{m\in\bZ}\bC L_m\oplus \bigoplus_{r\in\bZ+\epsilon}\bC G_r$, which satisfy the following relations:
   \begin{align*}
       [L_m,L_n]=(m-n)L_{m+n},\ [L_m,G_r]=(\frac{m}{2}-r)G_{m+r},\ [G_r,G_s]=2L_{r+s},
   \end{align*}
   for all $m,n\in\bZ$, $\epsilon=\frac{1}{2},0$, $r,s\in\bZ+\epsilon$. $\mathrm{SVir}_0$ is called the Ramond algebra and $\mathrm{SVir}_\frac{1}{2}$ is called the Neveu--Schwarz algebra. The representation theory for these Lie superalgebras are studied in a series of papers such as \cite{CLL,IK1,S,C,LPX20}.
\end{example}

\section{New Lie superalgebras derived from $S$}\label{sect:lie.salg.e}

\subsection{Annihilation superalgebra $\Lie(S)$}
\begin{lemma}\label{lem:bar.e}
    Suppose that the Lie superalgebra $\bar{\cS}^0$ is the annihilation superalgebra of $S$. Then $\bar{\cS}^0=\Lie(S)=\bigoplus_{m\in\bZ}(\bC L_m\oplus\bC W_m\oplus\bC G_m)$ satisfies the following relations\textup{:}
    \begin{align*}
        &[L_m,L_n]=(n-m)L_{m+n},\ \ [L_m,W_n]=(m+n)W_{m+n},\ \ [L_m,G_n]=nG_{m+n},\\
        &[G_m,G_n]=(m+n)W_{m+n},\ \ [W_m,W_n]=[W_m,G_n]=0,
    \end{align*}
    where $|L_m|=|W_m|=\ep$ and $|G_m|=\op$, $m,n\in\bZ$.
\end{lemma}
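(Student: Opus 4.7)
The plan is to unpack Definition~\ref{def:annalg} applied to $S$ and compute all brackets directly from the $\lambda$-brackets in \eqref{exc.lie.con.salg}. First, since $S=\cp L\oplus\cp W\oplus\cp G$ is free of rank $(2{+}1)$ over $\cp$, a $\bC$-basis of $S$ is $\{\partial^{k}L,\partial^{k}W,\partial^{k}G:k\in\bN\}$. Using the relation $(\partial a)_{(n)}=-n\,a_{(n-1)}$, every $(\partial^{k}a)_{(n)}$ collapses (up to sign and scalar) to $a_{(n-k)}$ and vanishes when $n<k$. Hence $\Lie(S)$ is spanned by $\{L_{(n)},W_{(n)},G_{(n)}:n\in\bN\}$, and freeness of $S$ over $\cp$ ensures $\bC$-linear independence.

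Next, I would reindex these generators to match the statement of the lemma. Reading off conformal weights from the first components of the $\lambda$-brackets, $L$, $W$, $G$ behave as primary fields of weights $2$, $0$, $1$ respectively, so the natural mode identifications are (up to a uniform sign) $L_m\leftrightarrow L_{(m+1)}$, $W_m\leftrightarrow W_{(m-1)}$, $G_m\leftrightarrow G_{(m)}$; the signs will be fixed by insisting on the coefficients appearing in the lemma. Under this convention the direct sum $\bigoplus_{m\in\bZ}$ in the statement is interpreted by declaring modes outside the admissible range to be zero.

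The computational core is then to extract the $j$-th products from \eqref{exc.lie.con.salg}:
\begin{align*}
L_{(0)}L=\partial L,\ L_{(1)}L=2L;\ \ L_{(0)}W=\partial W;\ \ L_{(0)}G=\partial G,\ L_{(1)}G=G;\ \ G_{(0)}G=\partial W,
\end{align*}
with all higher $j$-products zero, and $W_{(j)}W=W_{(j)}G=0$ (the vanishing of $G_{(j)}W$ for all $j$ following from super skew-symmetry applied to $[W_\lambda G]=0$). Feeding these into
\[
[a_{(m)},b_{(n)}]=\sum_{j\in\bN}\binom{m}{j}(a_{(j)}b)_{(m+n-j)}
\]
yields at most two nonzero terms in each case; for example
\[
[L_{(m)},L_{(n)}]=(\partial L)_{(m+n)}+2m\,L_{(m+n-1)}=(m-n)L_{(m+n-1)},
\]
and entirely analogous one- or two-term computations handle $[L_{(m)},W_{(n)}]$, $[L_{(m)},G_{(n)}]$, $[G_{(m)},G_{(n)}]$ as well as the vanishing brackets involving $W$.

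Translating back through the reindexing then produces precisely the four families of relations asserted by the lemma. The main obstacle is not conceptual but bookkeeping: pinning down the signs and index shifts in the reindexing simultaneously so that all four nontrivial brackets come out with the correct coefficients (in particular so that $[L_m,L_n]$ and $[G_m,G_n]$ carry consistent signs). Once these conventions are fixed, the verification of each relation is a one-line calculation, and the parity assignments $|L_m|=|W_m|=\ep$, $|G_m|=\op$ are immediate from the $\bZz$-grading of $S$.
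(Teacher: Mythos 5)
Your proposal is correct and takes essentially the same route as the paper: extract the $j$-th products from \eqref{exc.lie.con.salg}, feed them into \eqref{ann.lie.bracket}, and reindex via $L_{(m+1)}\to L_m$, $W_{(m-1)}\to W_m$, $G_{(m)}\to G_m$ followed by $m\to-m$ (your ``uniform sign''). The only differences are cosmetic --- you additionally justify the spanning set via freeness over $\cp$ and note the $G_{(j)}W$ vanishing from super skew-symmetry, which the paper leaves implicit.
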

\begin{proof}
    By the definition of the $j$-th product and \eqref{exc.lie.con.salg}, we have
    \begin{align*}
        &L_{(0)}L=\partial L,\ L_{(1)}L=2L,\ L_{(0)}W=\partial W,\ L_{(0)}G=\partial G,\ L_{(1)}G=G,\ G_{(0)}G=\partial W,\\
        &L_{(i+2)}L=L_{(i+1)}W=L_{(i+2)}G=G_{(i+1)}G=W_{(i)}W=W_{(i)}G=0,\ \ \forall\ i\ge0.
    \end{align*}
    This together with \eqref{ann.lie.bracket} shows
    \begin{align*}
        &[L_{(m)},L_{(n)}]=(m-n)L_{(m+n-1)},\ \ [L_{(m)},W_{(n)}]=-(m+n)W_{(m+n-1)},\\
        &[L_{(m)},G_{(n)}]=-nG_{(m+n-1)},\ \ [G_{(m)},G_{(n)}]=-(m+n)W_{(m+n-1)},
    \end{align*}
    and the other items vanish. Shifting $L_{(m+1)}\rightarrow L_m$, $W_{(m-1)}\rightarrow W_m$, $G_{(m)}\rightarrow G_m$ and then taking $m\rightarrow-m$ for all $m\in\bZ$, we get the conclusion in Lemma~\ref{lem:bar.e}.
\end{proof}

Referring to the super-Virasoro algebra, perhaps we can call the Lie superalgebra $\bar{\cS}^0$ is of Ramond type. Meanwhile, we can define the Neveu--Schwarz type denoted by $\bar{\cS}^\frac{1}{2}$ with the basis $\{L_m, W_m, G_r\ |\ m\in\bZ,\ r\in\bZ+\frac{1}{2}\}$ satisfying 
\begin{align*}
    &[L_m,L_n]=(n-m)L_{m+n},\ \ [L_m,W_n]=(m+n)W_{m+n},\ \ [L_m,G_r]=rG_{m+r},\\
    &[G_r,G_s]=(r+s)W_{r+s},\ \ [W_m,W_n]=[W_m,G_r]=0,
\end{align*}
where $m,n\in\bZ$, $r,s\in\bZ+\frac{1}{2}$. Obviously, $\bar{\cS}^\frac{1}{2}$ is a subalgebra of $\bar{\cS}^0$. Let $\varphi:\bar{\cS}^\frac{1}{2}\rightarrow\bar{\cS}^0$ be a linear map defined by
\begin{align*}
    L_m\mapsto\frac{1}{2}L_{2m},\ \ W_{m}\mapsto W_{2m},\ \ G_{m+\frac{1}{2}}\mapsto \frac{1}{\sqrt{2}}G_{2m+1},\ \ \text{for }m\in\bZ.
\end{align*}
It is easy to see that $\varphi$ is injective. As a consequence, we obtain the following Lie superalgebras $\bar{\cS}^\epsilon$ derived from the Lie conformal superalgebra $S$.
\begin{definition}
    The Lie superalgebras 
    \begin{equation*}
        \bar{\cS}^\epsilon=(\bar{\cS}^\epsilon)_\ep\oplus(\bar{\cS}^\epsilon)_\op=\bigoplus_{m\in\bZ}(\bC L_m\oplus\bC W_m)\oplus\bigoplus_{r\in\bZ+\epsilon}\bC G_r
    \end{equation*}
    are defined over $\bC$, where $(\bar{\cS}^\epsilon)_\ep=\bigoplus_{m\in\bZ}(\bC L_m\oplus\bC W_m)$ and $(\bar{\cS}^\epsilon)_\op=\bigoplus_{r\in\bZ+\epsilon}\bC G_r$, with the following super-brackets:
\begin{align*}
    &[L_m,L_n]=(n-m)L_{m+n},\ \ [L_m,W_n]=(m+n)W_{m+n},\ \ [L_m,G_r]=rG_{m+r},\\
    &[G_r,G_s]=(r+s)W_{r+s},\ \ [W_m,W_n]=[W_m,G_r]=0,
\end{align*}
for any $m,n\in\bZ$, $r,s\in\bZ+\epsilon$, $\epsilon=\frac{1}{2},0$. 
\end{definition}

\subsection{Central extension}
\hspace{1.5em}Let $\fg$ be a Lie superalgebra. A 2-\emph{cocycle} on $\fg$ is a $\bC$-bilinear function $\psi: \fg\times\fg\rightarrow\bC$ satisfying $(x,y,z\in\fg)$:
\begin{align}
     \text{(super skew-symmetry)}&\phantom{12} \psi(x,y)=-(-1)^{|x||y|}\psi(y,x),\label{2-cocycle.skew.symm}\\
        \text{(super Jacobi identity)}&\phantom{12} \psi(x,[y,z])=\psi([x,y],z])+(-1)^{|x||y|}\psi(y,[x,z]).\label{2-cocycle.Jacobi}
\end{align}
Denote by $C^2(\fg,\bC)$ the vector space of 2-cocycles on $\fg$. For any linear function $f:\fg\rightarrow\bC$, we can define a 2-cocycle $\psi_f$ on $\fg$ by
\begin{equation*}
    \psi_f(x,y)=f([x,y]),\ \ \ \forall\ x,y\in\fg,
\end{equation*}
which is called a 2-\emph{coboundary} on \fg. Denote by $B^2(\fg,\bC)$ the vector space of 2-coboundaries on \fg. A 2-cocycle $\phi$ is said to be \emph{equivalent} to another 2-cocycle $\psi$ if $\phi-\psi$ is a 2-coboundary. The quotient space
\begin{equation*}
    H^2(\fg,\bC)=C^2(\fg,\bC)/B^2(\fg,\bC)
\end{equation*}
is called the \emph{second cohomology group} of $\fg$ with trivial coefficients in \bC.

\begin{definition}
    Let $\fg=\fg_\ep\oplus\fg_\op$ be a Lie superalgebra. The pair $(\tilde{\fg},\varphi)$, where $\tilde{\fg}=\tilde{\fg}_\ep\oplus\tilde{\fg}_\op$ is a Lie superalgebra and $\varphi:\tilde{\fg}\rightarrow\fg$ is an epimorphism, is called a \emph{central extension} of $\fg$ by $\ker(\varphi)$ if $[\ker(\varphi),\tilde{\fg}]=0$. Regard $\ker(\varphi)$ as a super-commutative Lie superalgebra, and at the same time, a trivial \fg-module. We say that the central extension is \emph{even} (resp. \emph{odd}) if $\ker(\varphi)_\op=\{0\}$ (resp. $\ker(\varphi)_\ep=\{0\}$).
\end{definition}

Let $\fg$ be a Lie superalgebra. A one-dimensional central extension $(\tilde{\fg},\varphi)$ of $\fg$ by the central element $z$ is an exact sequence of Lie superalgebra homomorphisms
\begin{equation*}
    0\xrightarrow{\phantom{12345}}\bC z\xrightarrow{\phantom{12345}}\tilde{\fg}\xrightarrow{\phantom{12}\varphi\phantom{12}}\fg\xrightarrow{\phantom{12345}}0,
\end{equation*}
where $\tilde{\fg}\cong\fg\oplus\bC z$ as vector space. Then the exact sequence is equivalent to the existence of a bilinear form $\alpha$, such that
\begin{equation*}
    [x,y]_{\tilde{\fg}}=[x,y]_{\fg}+\alpha(x,y)z,\ \ \forall\ x,y\in\fg.
\end{equation*}
It is straightforward to check that $\alpha$ is a 2-cocycle on $\fg$. Furthermore, we have
\begin{proposition}\label{prop:2.coho}\textup{(\cite{N})}
    The second cohomology group $H^2(\fg,\bC)$ is in one-to-one correspondence with the set of the equivalence classes of the one-dimensional central extension of $\fg$.
\end{proposition}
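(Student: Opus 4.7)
The plan is to exhibit mutually inverse maps between the set of equivalence classes of one-dimensional central extensions of $\fg$ and $H^2(\fg,\bC)$, following the standard cohomological argument adapted to the super setting. The bilinear form $\alpha$ appearing right before the proposition already hints at the direction: every extension gives rise to a cocycle via a choice of splitting, and every cocycle gives rise to an extension via the formula $[x,y]_{\tilde\fg}=[x,y]_\fg+\alpha(x,y)z$. I just have to show both constructions are well defined, inverse to each other up to equivalence, and compatible with the equivalence relations on the two sides.

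First I would fix a one-dimensional central extension $0\to\bC z\to\tilde\fg\xrightarrow{\varphi}\fg\to0$ and pick a $\bZz$-graded $\bC$-linear section $s\colon\fg\to\tilde\fg$ with $\varphi\circ s=\mathrm{id}_\fg$. Define $\alpha_s\colon\fg\times\fg\to\bC$ by $[s(x),s(y)]_{\tilde\fg}-s([x,y]_\fg)=\alpha_s(x,y)z$. Using the fact that $z$ is central and that $s$ preserves parity, the super skew-symmetry and the super Jacobi identity on $\tilde\fg$ translate directly into \eqref{2-cocycle.skew.symm} and \eqref{2-cocycle.Jacobi}, so $\alpha_s\in C^2(\fg,\bC)$. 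A different graded section $s'$ differs from $s$ by $s'(x)-s(x)=f(x)z$ for a linear form $f\colon\fg\to\bC$ of degree $\ep$, and a direct computation gives $\alpha_{s'}-\alpha_s=\psi_f$, so the class of $\alpha_s$ in $H^2(\fg,\bC)$ is independent of the section. Equivalent extensions produce cohomologous cocycles by the same kind of chase, giving a well-defined map from extension classes to $H^2(\fg,\bC)$.

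Conversely, given $\alpha\in C^2(\fg,\bC)$, set $\tilde\fg_\alpha=\fg\oplus\bC z$ as a $\bZz$-graded vector space (with $z$ even) and define the bracket $[x+az,y+bz]=[x,y]_\fg+\alpha(x,y)z$. The super skew-symmetry of the new bracket is \eqref{2-cocycle.skew.symm}, and the super Jacobi identity for $\tilde\fg_\alpha$ reduces, after the inner brackets involving $z$ are killed by centrality, exactly to \eqref{2-cocycle.Jacobi}. The projection onto the first summand is the required epimorphism, so $(\tilde\fg_\alpha,\mathrm{pr})$ is a central extension. If $\alpha-\alpha'=\psi_f$ for some linear $f\colon\fg\to\bC$, then $x+az\mapsto x+(a+f(x))z$ is an isomorphism $\tilde\fg_\alpha\to\tilde\fg_{\alpha'}$ commuting with the projections, so the assignment $[\alpha]\mapsto[\tilde\fg_\alpha]$ is well defined on $H^2(\fg,\bC)$. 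Tracing through the two constructions shows they are mutually inverse: starting from $(\tilde\fg,\varphi)$ and a section $s$, the map $x+az\mapsto s(x)+az$ is an equivalence $\tilde\fg_{\alpha_s}\xrightarrow{\sim}\tilde\fg$; starting from $\alpha$ and taking the obvious section $x\mapsto x+0\cdot z$ of $\tilde\fg_\alpha$ returns $\alpha_s=\alpha$ on the nose.

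The only real subtlety I expect is bookkeeping of signs in the super setting: verifying the super Jacobi identity in both directions and confirming that $\alpha_{s'}-\alpha_s=\psi_f$ requires checking that the sign $(-1)^{|x||y|}$ inserted by definition matches what comes out of the Jacobi computation in $\tilde\fg$. Since $z$ is even and central, and the section $s$ preserves parity, all these signs reduce to the ordinary super signs on $\fg$, so the argument goes through without surprises. Nothing else in the proof depends on the particular Lie superalgebra, so the result applies verbatim to $\bar\cS^\epsilon$ and is what will be invoked in the following subsection to analyse the central extensions giving rise to $\cS$.
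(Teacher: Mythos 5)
The paper offers no proof of this proposition; it is quoted from Neher's survey \cite{N}, and the argument there is exactly the section-choosing argument you outline. Your forward direction (extension $\Rightarrow$ cocycle, independence of the graded section, compatibility with equivalence) and your verification that the Jacobi identity of $\tilde{\fg}_\alpha$ collapses to \eqref{2-cocycle.Jacobi} because $z$ is central are all correct and are the standard route.

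There is, however, one step that fails as written: in the converse construction you declare $z$ to be \emph{even}. The paper's notion of a one-dimensional central extension explicitly allows $\ker(\varphi)$ to be odd, and this is not a decorative generality --- the application in the very next subsection produces the odd central elements $C_3$ and $C_4$ of $\tilde{\cS}^0$ from the cocycles $\alpha_3(L_m,G_r)=(m^2+m)\delta_{m+r,0}$ and $\alpha_4(W_m,G_r)=\delta_{m+r,0}$, which are supported on pairs of opposite parity. For such a cocycle your bracket $[x,y]_{\tilde{\fg}_\alpha}=[x,y]_\fg+\alpha(x,y)z$ with $z$ even is not parity-preserving (e.g. $[L_m,G_{-m}]$ would acquire an even component $\alpha_3(L_m,G_{-m})z$), so $\tilde{\fg}_\alpha$ is not a Lie superalgebra and your inverse map is undefined on those classes. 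The repair is routine but must be said: a one-dimensional central extension forces the cocycle $\alpha_s$ to be homogeneous of parity $|z|$ (since $[s(x),s(y)]-s([x,y])\in\bC z\cap\tilde{\fg}_{|x|+|y|}$), a general $2$-cocycle splits as $\alpha=\alpha^{\ep}+\alpha^{\op}$ according to the parity of its support, and in the converse construction $z$ must be given the parity of the homogeneous cocycle one starts from. With that, the correspondence is really between homogeneous classes in $H^2(\fg,\bC)=H^2(\fg,\bC)_{\ep}\oplus H^2(\fg,\bC)_{\op}$ and even, respectively odd, one-dimensional central extensions; the remaining sign bookkeeping is unaffected because $\alpha$ is $\bC$-valued and the super signs involve only $|x|,|y|$.
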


Let $\alpha$ be any 2-cocycle on $\bar{\cS}^\epsilon$. Assume that $f:\bar{\cS}^\epsilon\rightarrow\bC$ is a linear map over $\bar{\cS}^\epsilon$ defined by
\begin{align*}
    &f(L_n)=\frac{1}{n}\alpha(L_0,L_n),\ \ f(W_n)=\frac{1}{n}\alpha(L_0,W_n),\ \ n\neq 0,\\
    &f(G_r)=\frac{1}{r}\alpha(L_0,G_r),\ \ r\neq 0.
\end{align*}
Set $\bar{\alpha}=\alpha-\alpha_f$, where $\alpha_f(x,y)=f([x,y])$ for any $x,y\in\bar{\cS}^\epsilon$.

Since $\bar{\cS}^\epsilon_\ep$ is a subalgebra of $\mathcal{W}(a,b)$, by \cite[Theorem~2.3]{GJP}, we have the following lemma:
\begin{lemma}\textup{(\cite{GJP})}\label{lem:CE.1}
    For some $c_1,c_2,c_3\in\bC$, we have 
    \begin{align*}
        &\bar{\alpha}(L_m,L_n)=\frac{m^3-m}{12}\delta_{m+n,0}c_1,\\
        &\bar{\alpha}(L_m,W_n)=m\delta_{m+n,0}c_2+\delta_{m+n,0}c_3,\\
        &\bar{\alpha}(W_m,W_n)=0,\ \ \forall\ m,n\in\bZ.
    \end{align*}
\end{lemma}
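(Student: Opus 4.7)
The plan is to reduce the claim to the known classification of the second cohomology of $\mathcal{W}(a,b)$ and then exploit the coboundary normalization that is built into the definition of $\bar{\alpha}$. First I would identify $\bar{\cS}^\epsilon_\ep=\bigoplus_{m\in\bZ}(\bC L_m\oplus\bC W_m)$ with $\mathcal{W}(a,b)$ for the precise parameters making the brackets $[L_m,L_n]=(n-m)L_{m+n}$, $[L_m,W_n]=(m+n)W_{m+n}$, $[W_m,W_n]=0$ match those of $\mathcal{W}(a,b)$ in the convention of \cite{GJP}. Once the identification is fixed, \cite[Theorem~2.3]{GJP} provides a finite list of surviving cohomology classes on $\mathcal{W}(a,b)$, giving explicit formulas on $L$-$L$, $L$-$W$, and $W$-$W$ slots in terms of finitely many complex parameters.

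Next I would verify that the coboundary $\alpha_f$ indeed kills the cohomologically trivial pieces of $\alpha$ that could otherwise appear in our formulas. Since $[L_0,L_n]=nL_n$ and $[L_0,W_n]=nW_n$, the definition of $f$ gives $\alpha_f(L_0,L_n)=\alpha(L_0,L_n)$ and $\alpha_f(L_0,W_n)=\alpha(L_0,W_n)$ for every $n\neq 0$, so $\bar{\alpha}(L_0,L_n)=\bar{\alpha}(L_0,W_n)=0$ whenever $n\neq 0$. Super skew-symmetry immediately yields $\bar{\alpha}(L_0,L_0)=0$. Feeding this normalization into the 2-cocycle identity \eqref{2-cocycle.Jacobi} applied to the triples $(L_0,L_m,L_n)$, $(L_0,L_m,W_n)$ and $(L_0,W_m,W_n)$ forces $\bar{\alpha}(L_m,L_n)$, $\bar{\alpha}(L_m,W_n)$ and $\bar{\alpha}(W_m,W_n)$ to be supported on $m+n=0$. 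Combined with the explicit shape of the Virasoro cocycle dictated by the GJP classification, the only surviving data are the cubic constant $c_1$ for $\bar{\alpha}(L_m,L_n)$, the linear-in-$m$ constant $c_2$ for $\bar{\alpha}(L_m,W_n)$, and the constant $c_3$ recording the value at $m=n=0$, while the $W$-$W$ slot vanishes as required.

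The main obstacle is purely bookkeeping: matching the $\mathcal{W}(a,b)$ parameter conventions of \cite{GJP} to our brackets and checking that the single coboundary $\alpha_f$ has already removed every parameter except $c_1,c_2,c_3$. There is no new cohomological computation to perform, because \cite[Theorem~2.3]{GJP} already handles the even subalgebra; the work lies in confirming that the three displayed formulas really are the residual form after subtracting $\alpha_f$.
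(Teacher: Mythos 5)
Your proposal is correct and matches the paper's treatment: the paper gives no independent proof of this lemma, merely observing that $\bar{\cS}^\epsilon_\ep$ sits inside $\mathcal{W}(a,b)$ and citing \cite[Theorem~2.3]{GJP}, which is exactly the reduction you perform. Your additional verification that $\alpha_f$ normalizes $\bar{\alpha}(L_0,\cdot)=0$ and hence concentrates everything on $m+n=0$ is sound and only makes explicit the bookkeeping the paper leaves implicit.
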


\begin{lemma}\label{lem:CE.2}
    $\bar{\alpha}(L_m,G_r)=(m^2+m)\delta_{m+r,0}c_4$, where $c_4\in\bC$ and $m\in\bZ$, $r\in\bZ+\epsilon$.
\end{lemma}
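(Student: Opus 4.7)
The plan is to determine $\bar\alpha(L_m, G_r)$ by applying the super Jacobi identity \eqref{2-cocycle.Jacobi} to two carefully chosen triples and then exploiting the residual coboundary freedom at the zero-grade generator $G_0$ to put the cocycle into the specific representative stated in the lemma.

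First, I would show $\bar\alpha(L_m, G_r)=0$ whenever $m+r\neq 0$. Applying super Jacobi to $(L_0, L_m, G_r)$ with $|L_0||L_m|=\ep$, and using $[L_0, L_m]=m L_m$, $[L_0, G_r]=rG_r$, $[L_m,G_r]=rG_{m+r}$, yields
\[
    r\,\bar\alpha(L_0, G_{m+r}) = (m+r)\,\bar\alpha(L_m, G_r).
\]
The definition of $f$ gives $\bar\alpha(L_0, G_s) = \alpha(L_0, G_s) - s f(G_s) = 0$ for every $s\neq 0$, so the left-hand side vanishes when $m+r\neq 0$, establishing the claim. In particular, in the Neveu--Schwarz case $\epsilon=\frac{1}{2}$ the sum $m+r$ is always a nonzero half-integer, so $\bar\alpha(L_m, G_r)\equiv 0$, which matches the lemma's formula via $\delta_{m+r,0}=0$.

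For the Ramond case $\epsilon=0$, set $\beta(m):=\bar\alpha(L_m, G_{-m})$ and apply super Jacobi to $(L_m, L_n, G_{-(m+n)})$. After using the first step to eliminate off-diagonal $\bar\alpha$-terms, this simplifies to the recurrence
\[
    (n-m)\,\beta(m+n) = (m+n)\bigl(\beta(n)-\beta(m)\bigr).
\]
Setting $m=0$ forces $\beta(0)=0$; direct substitution verifies that $\beta(m)=am+bm^2$ solves the recurrence for any $a,b\in\bC$, and a short induction from $\beta(1)$ and $\beta(-1)$ shows these polynomials exhaust every solution on $\bZ$.

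Finally, I would collapse the two parameters into one by absorbing a coboundary at $G_0$, which has been left untouched by the paper's initial definition of $f$. Since $\alpha_f(L_m,G_{-m})=f([L_m,G_{-m}])=-m f(G_0)$, setting $f(G_0):=b-a$ shifts $\beta(m)$ by $m(b-a)$ and converts $am+bm^2$ into $b(m^2+m)$ without affecting the earlier normalizations; taking $c_4:=b$ then gives the claimed identity. The main subtlety is this final coboundary step: the recurrence alone leaves a two-dimensional solution space, and the one-parameter representative $(m^2+m)c_4$ arises only after exhausting the freedom in choosing $f$ on the zero-grade element $G_0$.
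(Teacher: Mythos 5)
Your proposal is correct and follows essentially the same route as the paper: the same applications of the cocycle Jacobi identity to triples $(L_m,L_n,G_r)$, the same vanishing of the off-diagonal part via the definition of $f$, the same reduction to the diagonal function $\beta(m)=\bar{\alpha}(L_m,G_{-m})$ with $\beta(0)=0$, and the same use of the residual coboundary freedom at $G_0$ to kill the linear part. The only cosmetic difference is the order of operations: you solve the recurrence first (obtaining the two-parameter family $am+bm^2$) and then absorb a coboundary, whereas the paper fixes $f(G_0)$ first (forcing $g(-1)=0$) and then solves.
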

\begin{proof}
    By \eqref{2-cocycle.Jacobi}, considering $\alpha(L_m,[L_n,G_r])=\alpha([L_m,L_n],G_r)+\alpha(L_n,[L_m,G_r])$, we obtain
    \begin{align}\label{alpha.LLG}
        r\alpha(L_m,G_{n+r})=(n-m)\alpha(L_{m+n},G_r)+r\alpha(L_n,G_{m+r}).
    \end{align}
    Letting $m=0$ in \eqref{alpha.LLG}, we have $r\alpha(L_0,G_{n+r})=(n+r)\alpha(L_n,G_r)$. Then, 
    \begin{equation*}
        \alpha(L_n,G_r)=\frac{r}{n+r}\alpha(L_0,G_{n+r}),\ \ n+r\neq 0.
    \end{equation*}
    It follows that 
    \begin{align*}
        \bar{\alpha}(L_n,G_r)&=\alpha(L_n,G_r)-\alpha_f(L_n,G_r)=\frac{r}{n+r}\alpha(L_0,G_{n+r})-f([L_n,G_r])\\
        &=\frac{r}{n+r}\alpha(L_0,G_{n+r})-rf(G_{n+r})=0,\ \  n+r\neq 0.
    \end{align*}
    The equality \eqref{alpha.LLG} also holds if $\alpha$ is replaced by $\bar{\alpha}$. Putting $r=-m-n$ in \eqref{alpha.LLG}, we can deduce that 
    \begin{equation}\label{alpha.LLG.1}
        (m+n)\bar{\alpha}(L_m,G_{-m})=(m-n)\bar{\alpha}(L_{m+n},G_{-m-n})+(m+n)\bar{\alpha}(L_n,G_{-n}).
    \end{equation}
    Suppose that $\bar{\alpha}(L_m,G_{-m})=g(m)$. Then \eqref{alpha.LLG.1} turns into 
    \begin{equation}\label{alpha.LLG.2}
        (m+n)g(m)=(m-n)g(m+n)+(m+n)g(n).
    \end{equation}
    Putting $n=-m$ in \eqref{alpha.LLG.2}, we get $2mg(0)=0$, which implies that $g(0)=0$. Define $f(G_0)=\alpha(L_{-1},G_1)$. Then, 
    \begin{align*}
        g(-1)&=\bar{\alpha}(L_{-1},G_1)=\alpha(L_{-1},G_1)-\alpha_f(L_{-1},G_1)\\
    &=\alpha(L_{-1},G_1)-f([L_{-1},G_1])=\alpha(L_{-1},G_1)-f(G_0)=0.
    \end{align*}
    Taking $n=1$ in \eqref{alpha.LLG.2}, we have
    \begin{equation}\label{alpha.LLG.3}
        (m+1)g(m)=(m-1)g(m+1)+(m+1)g(1).
    \end{equation}
    Moreover, setting $n=-1$ and replacing $m$ by $m+1$ in \eqref{alpha.LLG.2}, we can derive that
    \begin{equation}\label{alpha.LLG.4}
        mg(m+1)=(m+2)g(m).
    \end{equation}
    Combining \eqref{alpha.LLG.3} and \eqref{alpha.LLG.4}, we get $g(m)=\frac{1}{2}(m^2+m)g(1)$ for any $m\in\bZ$. Therefore, $$\bar{\alpha}(L_m,G_r)=(m^2+m)\delta_{m+r,0}c,$$ for some constant $c\in\bC$.
\end{proof}

\begin{lemma}\label{lem:CE.3}
    $\bar{\alpha}(W_m,G_r)=\delta_{m+r,0}c_5$, where $c_5\in\bC$ and $m\in\bZ$, $r\in\bZ+\epsilon$.
\end{lemma}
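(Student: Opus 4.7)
The plan is to exploit the super Jacobi identity \eqref{2-cocycle.Jacobi} satisfied by the adjusted 2-cocycle $\bar{\alpha}$ on a carefully chosen triple of homogeneous elements. Specifically, I would apply it to $(L_m, W_n, G_r)$. Using the brackets $[L_m, W_n] = (m+n)W_{m+n}$, $[L_m, G_r] = r G_{m+r}$, and $[W_n, G_r] = 0$, together with $|L_m| = |W_n| = \ep$ so that the sign $(-1)^{|L_m||W_n|}$ equals $1$, the identity collapses to the key recursion
$$(m+n)\,\bar{\alpha}(W_{m+n}, G_r) \;+\; r\,\bar{\alpha}(W_n, G_{m+r}) \;=\; 0.$$

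First I would specialize to $m = 0$ to get $(n + r)\,\bar{\alpha}(W_n, G_r) = 0$, so $\bar{\alpha}(W_n, G_r) = 0$ whenever $n + r \neq 0$. This already disposes of the Neveu--Schwarz case $\epsilon = \tfrac{1}{2}$: since $n + r \in \bZ + \tfrac{1}{2}$ is never zero, $\bar{\alpha}(W_n, G_r)$ vanishes identically and we may take $c_5 = 0$, matching $\delta_{m+r,0}c_5 = 0$.

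For the Ramond case $\epsilon = 0$, what remains is to pin down the ``diagonal'' values $\bar{\alpha}(W_m, G_{-m})$ for $m \in \bZ$. Substituting $r = -m-n$ into the recursion yields
$$(m+n)\bigl[\,\bar{\alpha}(W_{m+n}, G_{-m-n}) - \bar{\alpha}(W_n, G_{-n})\,\bigr] = 0,$$
so for every $m + n \neq 0$ the value $\bar{\alpha}(W_k, G_{-k})$ is independent of $k \in \bZ \setminus \{0\}$. To attach $k = 0$ to the same constant, I would take $n = 0$ with $m \neq 0$ in the original recursion, which gives $m\,\bar{\alpha}(W_m, G_{-m}) - m\,\bar{\alpha}(W_0, G_0) = 0$, and hence $\bar{\alpha}(W_m, G_{-m}) = \bar{\alpha}(W_0, G_0) =: c_5$ for all $m \in \bZ$.

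There is no serious obstacle here; the argument is a direct manipulation of a single Jacobi relation. The only points requiring a bit of care are: bookkeeping the two parities $\epsilon \in \{0, \tfrac{1}{2}\}$; noting that the sign factor in the super Jacobi identity is trivial because both $L_m$ and $W_n$ are even; and treating the degenerate slice $m + n = 0$ separately (via the $n = 0$ specialization) so that $\bar{\alpha}(W_0, G_0)$ is brought into the same constant as the rest. No further coboundary adjustment of $\bar{\alpha}$ is needed, since $[W_n, G_r] = 0$ means any modification by an $\alpha_f$ term along $W$-$G$ pairs would have to come from a linear functional on the trivial bracket, which contributes nothing.
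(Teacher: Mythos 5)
Your proposal is correct and follows essentially the same route as the paper: both apply the $2$-cocycle Jacobi identity to $(L_m,W_n,G_r)$, use $[W_n,G_r]=0$ to obtain the relation $(m+n)\bar{\alpha}(W_{m+n},G_r)+r\bar{\alpha}(W_n,G_{m+r})=0$, specialize $m=0$ to kill all off-diagonal terms (which settles $\epsilon=\tfrac12$ outright), and then set $n=0$, $r=-m$ to identify all diagonal values with $\bar{\alpha}(W_0,G_0)$ in the Ramond case. The extra $r=-m-n$ substitution you include is harmless but redundant given the $n=0$ specialization.
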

\begin{proof}
    By \eqref{2-cocycle.Jacobi}, considering $\bar{\alpha}(L_m,[W_n,G_r])=\bar{\alpha}([L_m,W_n],G_r)+\bar{\alpha}(W_n,[L_m,G_r])$, we obtain
    \begin{equation}\label{alpha.LWG}
        (m+n)\bar{\alpha}(W_{m+n},G_r)+r\bar{\alpha}(W_n,G_{m+r})=0.
    \end{equation}
    Putting $m=0$ in \eqref{alpha.LWG}, we get $(n+r)\bar{\alpha}(W_n,G_r)=0$, which implies that $\bar{\alpha}(W_n,G_r)=0$ if $n+r\neq 0$. Thus for $\epsilon=\frac{1}{2}$, we have $\bar{\alpha}(W_m,G_r)=0$ for any $m\in\bZ,r\in\bZ+\frac{1}{2}$. If $\epsilon=0$, we can take $n=0$ and $r=-m$ in \eqref{alpha.LWG} and get $m\bar{\alpha}(W_m,G_{-m})=m\bar{\alpha}(W_0,G_0)$, which leads to $\bar{\alpha}(W_m,G_{-m})=\bar{\alpha}(W_0,G_0)=c$, where $c\in\bC$ is a constant.
\end{proof}

\begin{lemma}\label{lem:CE.4}
    $c_2=0$ and $\bar{\alpha}(G_r,G_s)=\delta_{r+s,0}c_3$, where $r,s\in\bZ+\epsilon$.
\end{lemma}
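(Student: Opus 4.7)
The plan is to apply the super Jacobi identity~\eqref{2-cocycle.Jacobi} for $\bar{\alpha}$ to the triple $(L_m, G_r, G_s)$. Since $L_m$ is even, the sign in~\eqref{2-cocycle.Jacobi} is $+1$, and after substituting the brackets of $\bar{\cS}^\epsilon$ from Lemma~\ref{lem:bar.e} one obtains
\[
(r+s)\,\bar{\alpha}(L_m, W_{r+s}) = r\,\bar{\alpha}(G_{m+r}, G_s) + s\,\bar{\alpha}(G_r, G_{m+s}).
\]
By Lemma~\ref{lem:CE.1} the left side equals $(r+s)(m c_2 + c_3)\delta_{m+r+s,0}$. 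Setting $m = 0$ makes the left side vanish and reduces the right side to $(r+s)\bar{\alpha}(G_r, G_s)$, forcing $\bar{\alpha}(G_r, G_s) = 0$ whenever $r + s \neq 0$. Hence only the diagonal values $p(r) := \bar{\alpha}(G_r, G_{-r})$ remain unknown, and super skew-symmetry for two odd elements gives $p(-r) = p(r)$.

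Specializing the displayed identity to $s = -m - r$ produces the one-variable functional equation
\[
r\,p(m+r) - (m+r)\,p(r) = -m^2 c_2 - m c_3 \qquad (m \in \bZ,\ r \in \bZ + \epsilon).
\]
The main obstacle is extracting $c_2 = 0$, and this is where the two types split. For the Ramond case ($\epsilon = 0$) I would set $r = 0$: the equation becomes $-m\,p(0) = -m^2 c_2 - m c_3$ for every $m \in \bZ$, and comparing the coefficients of $m$ and $m^2$ yields $c_2 = 0$ and $p(0) = c_3$. For the Neveu--Schwarz case ($\epsilon = 1/2$) I would substitute $(m, r) = (-1, 1/2)$ and $(m, r) = (1, -1/2)$; using $p(-r) = p(r)$, these collapse respectively to $p(1/2) = -c_2 + c_3$ and $p(1/2) = c_2 + c_3$, again forcing $c_2 = 0$ and giving $p(1/2) = c_3$.

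With $c_2 = 0$ in hand, setting $m = 1$ in the functional equation gives the recurrence $r\,p(r+1) = (r+1)\,p(r) - c_3$, valid for $r \neq 0$. Starting from the base value computed above, a direct induction shows $p(r) = c_3$ for every positive $r \in \bZ + \epsilon$, and the parity relation $p(-r) = p(r)$ extends this to all $r$. Combining with the vanishing off the diagonal established in the first paragraph, this yields $\bar{\alpha}(G_r, G_s) = \delta_{r+s, 0}\, c_3$, completing the lemma.
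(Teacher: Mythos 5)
Your overall route coincides with the paper's: the same cocycle identity \eqref{2-cocycle.Jacobi} applied to $(L_m,G_r,G_s)$, the same $m=0$ specialization to kill the off-diagonal values, and the same reduction to the diagonal function $p(r)=\bar{\alpha}(G_r,G_{-r})$ together with its evenness $p(-r)=p(r)$. Your extraction of $c_2=0$ via small specializations is a legitimate variant of the paper's argument (which instead compares the identity with its image under $(r,s)\mapsto(-r,-s)$ to get $2(r+s)^2c_2=0$), and your functional equation $r\,p(m+r)-(m+r)p(r)=-m^2c_2-mc_3$ is just the paper's two-variable relation reparametrized. Up to and including the determination of $c_2=0$ and the base values $p(0)=c_3$ (Ramond) and $p(1/2)=c_3$ (Neveu--Schwarz), everything checks out.

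There is, however, a gap in your final induction in the Ramond case $\epsilon=0$. Your recurrence $r\,p(r+1)=(r+1)p(r)-c_3$ determines $p(r+1)$ from $p(r)$ only when $r\neq 0$, and the one base value you have computed is $p(0)=c_3$; at $r=0$ the recurrence degenerates to $0=p(0)-c_3$, which is consistent but gives no information about $p(1)$. So the induction "for every positive $r$" cannot start: $p(1)$ is never pinned down, and hence neither is $p(2),p(3),\dots$. (For $\epsilon=\tfrac12$ the base is $p(1/2)$ and the recurrence runs at $r=\tfrac12,\tfrac32,\dots$, so that branch is fine.) The repair is one line: with $c_2=0$, put $(m,r)=(2,-1)$ in your functional equation and use $p(-1)=p(1)$ to get $-2p(1)=-2c_3$, i.e.\ $p(1)=c_3$, after which your recurrence takes over from $r=1$. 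Alternatively, do what the paper does and keep the symmetric two-variable form \eqref{alpha.LGG.2.1}, $(r+s)c_3=rg(s)+sg(r)$: setting $s=1$ gives $g(r)=(r+1)c_3-rg(1)=c_3$ for \emph{all} $r$ at once, with no division by $r$ and no induction, which is why the paper's version avoids this issue entirely.
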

\begin{proof}
    By \eqref{2-cocycle.Jacobi}, using $\bar{\alpha}(L_m,[G_r,G_s])=\bar{\alpha}([L_m,G_r],G_s)+\bar{\alpha}(G_r,[L_m,G_s])$, we conclude that
    \begin{equation}\label{alpha.LGG}
        (r+s)\bar{\alpha}(L_m,W_{r+s})=r\bar{\alpha}(G_{m+r},G_s)+s\bar{\alpha}(G_r,G_{m+s}).
    \end{equation}
    Taking $m=0$ in \eqref{alpha.LGG}, we have $(r+s)\bar{\alpha}(L_0,W_{r+s})=(r+s)\bar{\alpha}(G_r,G_s)$. It implies that
    \begin{equation}\label{alpha.LGG.1}
        \bar{\alpha}(L_0,W_{r+s})=\bar{\alpha}(G_r,G_s),\ \ r+s\neq 0.
    \end{equation}
    By Lemma~\ref{lem:CE.1}, we know that $\bar{\alpha}(L_0,W_{r+s})=0$ if $r+s\neq 0$. This together with \eqref{alpha.LGG.1} shows that $\bar{\alpha}(G_r,G_s)=0$ if $r+s\neq 0$.

    Putting $m=-r-s$ in \eqref{alpha.LGG} gives
    \begin{equation}\label{alpha.LGG.2}
        (r+s)\bar{\alpha}(L_{-r-s},W_{r+s})=r\bar{\alpha}(G_{-s},G_s)+s\bar{\alpha}(G_r,G_{-r}).
    \end{equation}
    Note that $\bar{\alpha}(G_{-s},G_s)=-(-1)^{|G_{-s}||G_s|}\bar{\alpha}(G_s,G_{-s})=\bar{\alpha}(G_s,G_{-s})$ by \eqref{2-cocycle.skew.symm}. Suppose that $\bar{\alpha}(G_s,G_{-s})=g(s)$. Then \eqref{alpha.LGG.2} becomes 
    \begin{equation}\label{alpha.LGG.3}
        (r+s)\bar{\alpha}(L_{-r-s},W_{r+s})=rg(s)+sg(r).
    \end{equation}
    Taking $r=-r$, $s=-s$ in \eqref{alpha.LGG.3}, we have
    \begin{equation}\label{alpha.LGG.4}
        (r+s)\bar{\alpha}(L_{r+s},W_{-r-s})=rg(s)+sg(r).
    \end{equation}\label{alpha.LGG.5}
    From \eqref{alpha.LGG.3} and \eqref{alpha.LGG.4}, we can deduce that
    \begin{equation*}
        (r+s)\bar{\alpha}(L_{-r-s},W_{r+s})=(r+s)\bar{\alpha}(L_{r+s},W_{-r-s}).
    \end{equation*}
   By Lemma~\ref{lem:CE.1}, we get $2(r+s)^2c_2=0$, which leads to $c_2=0$. So \eqref{alpha.LGG.3} turns into 
   \begin{equation}\label{alpha.LGG.2.1}
       (r+s)c_3=rg(s)+sg(r).
   \end{equation}
   
   If $r,s\in\bZ$, then we can take $s=1$ in \eqref{alpha.LGG.2.1} and obtain
   \begin{equation}\label{alpha.LGG.2.2}
       (r+1)c_3=rg(1)+g(r).
   \end{equation}
   Putting $r=1$ in \eqref{alpha.LGG.2.2} gives $g(1)=c_3$. Plugging it into \eqref{alpha.LGG.2.2}, we have $g(r)=c_3$ for $r\in\bZ$.

   If $r,s\in\bZ+\frac{1}{2}$, then we take $s=\frac{1}{2}$ in \eqref{alpha.LGG.2.1} and obtain 
   \begin{equation}\label{alpha.LGG.2.3}
       (r+\frac{1}{2})c_3=rg(\frac{1}{2})+\frac{1}{2}g(r).
   \end{equation}
   We can get $g(\frac{1}{2})=c_3$ by taking $r=\frac{1}{2}$ in \eqref{alpha.LGG.2.3}. Hence, $g(r)=c_3$ for $r\in\bZ+\frac{1}{2}$, and the lemma follows.
\end{proof}

By Lemmas~\ref{lem:CE.1}--\ref{lem:CE.4}, we get the central extension of the Lie superalgebras $\bar{\cS}^\epsilon$. The new algebras denoted by $\tilde{\cS}^\epsilon$ have a basis $\{L_m, W_m, G_r, C_1, C_2, C_3, C_4\ |\ m\in\bZ,\ r\in\bZ+\epsilon\}$, with super-brackets:
\begin{align*}
    &[L_m,L_n]=(n-m)L_{m+n}+\frac{m^3-m}{12}\delta_{m+n,0}C_1,\\
    &[L_m,W_n]=(m+n)W_{m+n}+\delta_{m+n,0}C_2,\\
    &[L_m,G_r]=rG_{m+r}+(m^2+m)\delta_{m+r,0}C_3,\\
    &[W_m,G_r]=\delta_{m+r,0}C_4,\\
    &[G_r,G_s]=(r+s)W_{r+s}+\delta_{r+s,0}C_2,\\
    &[W_m,W_n]=[\mathcal{S}^\epsilon,C_i]=0,\ \ i=1,2,3,4,
\end{align*}
for any $m,n\in\bZ$, $r,s\in\bZ+\epsilon$, $\epsilon=\frac{1}{2},0$. Note that $\bC C_3$ and $\bC C_4$ are odd central elements.

In addition, by Proposition~\ref{prop:2.coho}, we arrive at the following theorem.
\begin{theorem}\label{thm:2-cohology}
    \subno{1} $\dim H^2(\cS^0,\bC)=4$\textup{;} $H^2(\cS^0,\bC)=\bC\alpha_1\oplus\bC\alpha_2\oplus\bC\alpha_3\oplus\bC\alpha_4$, where
    \begin{align*}
        &\alpha_1(L_m,L_n)=\frac{m^3-m}{12}\delta_{m+n,0},\\
        &\alpha_2(L_m,W_n)=\delta_{m+n,0},\ \ \alpha_2(G_r,G_s)=\delta_{r+s,0},\\
        &\alpha_3(L_m,G_r)=(m^2+m)\delta_{m+r,0},\\
        &\alpha_4(W_m,G_r)=\delta_{m+r,0},
    \end{align*}
    for any $m,n,r,s\in\bZ$.

    \subno{2} $\dim H^2(\cS^\frac{1}{2},\bC)=2$\textup{;} $H^2(\cS^\frac{1}{2},\bC)=\bC\beta_1\oplus\bC\beta_2$, where
    \begin{align*}
        &\beta_1(L_m,L_n)=\frac{m^3-m}{12}\delta_{m+n,0},\\
        &\beta_2(L_m,W_n)=\delta_{m+n,0},\ \ \beta_2(G_r,G_s)=\delta_{r+s,0},
    \end{align*}
    for any $m,n\in\bZ$, $r,s\in\bZ+\frac{1}{2}$.
\end{theorem}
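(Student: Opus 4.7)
The plan is to combine the bijection in Proposition~\ref{prop:2.coho} with the explicit computations in Lemmas~\ref{lem:CE.1}--\ref{lem:CE.4}, which together determine every 2-cocycle up to coboundary. First I would take an arbitrary $\alpha \in C^2(\bar{\cS}^\epsilon, \bC)$ and pass to the cohomologous form $\bar{\alpha} = \alpha - \alpha_f$, where $f$ is the linear functional defined immediately before Lemma~\ref{lem:CE.1}. This normalization kills $\bar{\alpha}(L_0, L_n)$, $\bar{\alpha}(L_0, W_n)$ and $\bar{\alpha}(L_0, G_r)$ for nonzero $n, r$, and is precisely the setup under which the four lemmas apply.

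Next, reading the lemmas in order, one extracts at most the scalars $c_1, c_2, c_3, c_4, c_5$ describing $\bar{\alpha}$ on the standard basis; Lemma~\ref{lem:CE.4} then forces $c_2 = 0$. Thus in the Ramond case $\epsilon = 0$, every cohomology class is represented by $c_1 \alpha_1 + c_3 \alpha_2 + c_4 \alpha_3 + c_5 \alpha_4$ on the standard basis, which accounts for all four generators listed in part~(1). In the Neveu--Schwarz case $\epsilon = 1/2$, the elementary parity observation that $m + r \in \bZ + 1/2 \neq 0$ for $m \in \bZ$ and $r \in \bZ + 1/2$ forces $\alpha_3$ and $\alpha_4$ to vanish identically, so only $c_1 \beta_1 + c_3 \beta_2$ survives, matching part~(2).

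To complete the dimension statement I would verify linear independence modulo $B^2(\bar{\cS}^\epsilon, \bC)$. For any linear $g: \bar{\cS}^\epsilon \to \bC$, the coboundary $\alpha_g$ restricted to $(L_m, L_{-m})$ equals $-2m\, g(L_0)$, which is linear in $m$ and cannot cancel the cubic $\frac{m^3 - m}{12}$ in $\alpha_1$; analogous degree-based comparisons on the pairs $(L_m, W_{-m})$, $(L_m, G_{-m})$ and $(W_m, G_{-m})$ separate $\alpha_2, \alpha_3, \alpha_4$ in turn, since each of the corresponding coboundary restrictions $m\, g(W_0)$, $-m\, g(G_0)$ and $g(\text{odd central})$ has lower polynomial degree or different support than the canonical cocycle. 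I do not expect any real obstacle at this stage, since the substantive work is already carried out in Lemmas~\ref{lem:CE.1}--\ref{lem:CE.4}; the only genuine care point is to handle the Ramond/Neveu--Schwarz dichotomy cleanly, making sure that the Kronecker delta identities used to eliminate $\alpha_3$ and $\alpha_4$ in the $\epsilon = 1/2$ case are invoked consistently when translating Lemmas~\ref{lem:CE.2} and~\ref{lem:CE.3} to that setting.
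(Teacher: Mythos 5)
Your proposal is correct and follows essentially the same route as the paper, which obtains the theorem by assembling Lemmas~\ref{lem:CE.1}--\ref{lem:CE.4} (applied to the normalized cocycle $\bar{\alpha}=\alpha-\alpha_f$) with Proposition~\ref{prop:2.coho}, the parity observation $m+r\in\bZ+\frac{1}{2}$ disposing of $\alpha_3,\alpha_4$ in the Neveu--Schwarz case exactly as you say. The only slip is in your independence check: since $[L_m,W_{-m}]=0$ and $[W_m,G_{-m}]=0$ in $\bar{\cS}^\epsilon$, the coboundaries restricted to these pairs vanish identically rather than equalling $m\,g(W_0)$ or $g(\text{odd central})$, which only makes the non-triviality of $\alpha_2$ and $\alpha_4$ more immediate.
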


\subsection{Lie superalgebra $\cS$}
\hspace{1.5em}In the following, we only focus on the case of the central extension of $\bar{\cS}^\frac{1}{2}$, that is the Lie superalgebra $\cS$ defined in Definition~\ref{def:e}.

First, recall that the Ovsienko--Roger algebra $\mathcal{L}_\lambda$ is the centrally extended Lie algebra of $\mathcal{W}(a,b)$, which arose from the study on matrix analogues of the Sturm--Liouville operators by Ovsienko and Roger (cf. \cite{MOG}). For the case of $\lambda=1$, we have the following definition.
\begin{definition}
    The $\lambda=1$ Ovsienko--Roger algebra
    \begin{equation*}
        \mathcal{L}_1=\bigoplus_{n\in\bZ}\bC L_n\oplus\bigoplus_{n\in\bZ}\bC W_n\oplus\bC C_1\oplus\bC C_2\oplus\bC C_3
    \end{equation*}
    is a Lie algebra with the following brackets ($m,n\in\bZ$):
    \begin{align*}
        &[L_m,L_n]=(n-m)L_{m+n}+\frac{m^3-m}{12}\delta_{m+n,0} C_1,\\
        &[L_m,W_n]=(m+n)W_{m+n}+\delta_{m+n,0}(m C_2+ C_3),\\
        &[W_m,W_n]=[\mathcal{L}_1,C_i]=0,\ \ i=1,2,3.
    \end{align*}
\end{definition}
We say a Lie algebra $\fg$ is \emph{perfect} if $\fg=[\fg,\fg]$. Obviously, $\mathcal{L}_1$ is non-perfect since $W_0\notin[\mathcal{L}_1,\mathcal{L}_1]$. The category of restricted modules for $\mathcal{L}_1$ was studied in \cite{LPX22}. Also in \cite{LPX22}, many simple $\mathcal{L}_1$-modules were constructed. 

Now, we return to the Lie superalgebra $\cS$. Clearly, the even subalgebra of $\cS$ is a subalgebra of $\mathcal{L}_1$. The lack of $C_2$ (of $\mathcal{L}_1$) leads to significant differences in their modules. We will show it later. Moreover, $\cS$ is still non-perfect due to $W_0$.

By Definition~\ref{def:e}, we have the decomposition $\cS=\cS_\ep\oplus\cS_\op$, where
\begin{equation*}
    \cS_\ep=\bigoplus_{m\in\bZ}(\bC L_m\oplus\bC W_m)\oplus\bC C_1\oplus\bC C_2,\ \ \cS_\op=\bigoplus_{r\in\bZ+\frac{1}{2}}\bC G_r.
\end{equation*}
It follows that $\cS$ possesses a triangular decomposition $\cS=\cS_-\oplus\cS_0\oplus\cS_+$, where
\begin{align*}
    &\cS_{\pm}=\bigoplus_{m\in\bZ_+}(\bC L_{\pm m}\oplus\bC W_{\pm m})\oplus\bigoplus_{r\in\bN+\frac{1}{2}}\bC G_{\pm r},\\
    &\cS_0=\bC L_0\oplus\bC W_0\oplus\bC C_1\oplus\bC C_2.
\end{align*}

Let $\varphi:\cS_0\rightarrow\bC$ be a linear function. Suppose that $V$ is an $\cS_0$-module with the action of the center $z=\bC C_1\oplus\bC C_2$ on $V$ is given by $\varphi$ and $\cS_+$ acts trivially on $V$, making $V$ an $(\cS_0\oplus\cS_+)$-module. The \emph{generalized Verma module} over $\cS$ can be defined by
\begin{equation*}
    M_\varphi(V)=\cU(\cS)\otimes_{\cU(\cS_0\oplus\cS_+)}V.
\end{equation*}
In particular, for $h_1,h_2,c_1\in\bC$, let $V=\bC v$ be a one-dimensional $(\cS_0\oplus\cS_+)$-module defined by
\begin{equation*}
    L_0v=h_1 v,\ \ W_0 v=h_2 v,\ \ C_1v=c_1v,\ \ C_2v=0,\ \ \cS_+v=0,
\end{equation*}
where $\varphi(L_0)=h_1$, $\varphi(W_0)=h_2$, $\varphi(C_1)=c_1$ and $\varphi(C_2)=0$. Note that $\varphi(C_2)=0$ is necessary since $[L_0,W_0]=C_2$. Then the \emph{Verma module} of $\cS$ can be defined by 
\begin{equation*}
    M(h_1,h_2,c_1)=\cU(\cS)\otimes_{\cU(\cS_0\oplus\cS_+)}\bC v.
\end{equation*}

For any $k\in\bZ_+$, let
\begin{equation*}
    \cS^{(k)}=\bigoplus_{i\ge k}(\bC L_i\oplus\bC W_i\oplus\bC G_{i+\frac{1}{2}})\oplus\bC C_1\oplus\bC C_2.
\end{equation*}
Then $\cS^{(k)}$ is a subalgebra of $\cS$. Let $\psi_k:\cS^{(k)}\rightarrow \bC$ be a Lie superalgebra homomorphism. It follows immediately from the relation \eqref{e.rel} that 
\begin{equation*}
        \psi_k(L_m)=0,\ \ \psi_k(W_n)=0,\ \ \psi_k(G_{r+\frac{1}{2}})=0,\ \ \forall\ m\ge 2k+1,\ n\ge 2k,\ r\ge k.
\end{equation*}
\begin{definition}
    For $c_1,c_2\in\bC$, an $\cS$-module $W$ is called a \emph{Whittaker module} of type $(\psi_k,c_1,c_2)$ if

    \subno{1} $W$ is generated by a homogeneous vector $w$,

    \subno{2} $xw=\psi_k(x)w$ for any $x\in\cS^{(k)}$,

    \subno{3} $C_1w=c_1w$, $C_2w=c_2w$,\newline
    where $w$ is called a \emph{Whittaker vector} of $W$. 
\end{definition}

\begin{definition}
    An $\cS$-module $M$ is called \emph{restricted} in the sense that for every $v\in M$,  
    \begin{equation*}
        L_i v=W_i v=G_{i+\frac{1}{2}}v=0,
    \end{equation*}
    for $i$ sufficiently large.
\end{definition}
It is clear that both generalized Verma modules and Whittaker modules over $\cS$ are restricted.

Denote by $\bM$ the set of all infinite vectors of the form $\bi:=(\dots,i_2,i_1)$ with entries in $\bN$ such that the number of nonzero entries is finite and $\bM_1=\{\bi\in\bM\ |\ i_k=0,1,\ \forall\ k\in\bZ_+\}$. Let $\bO=(\dots,0,0)\in\bM$ (or $\bM_1$). For $i\in\bZ_+$, denote $e_i=(\dots,0,1,0,\dots,0)\in\bM$ (or $\bM_1$), where $1$ is in the $i$'th position from the right. For any $\bi\in\bM$ (or $\bM_1$), we write
\begin{align*}
    \bw(\bi)=\sum_{s\in\bZ_+}s\cdot i_s,\ \ \bd(\bi)=\sum_{s\in\bZ_+}i_s,
\end{align*}
which are nonnegative integers. For any nonzero $\bi\in\bM$ (or $\bM_1$), namely $\bi\neq\bO$, assume that $p$ and $q$ are the largest and smallest integers such that $i_p\neq 0$ and $i_q\neq 0$, respectively, and define $\bi'=\bi-e_p$ and $\bi''=\bi-e_q$.

\begin{definition}
    Denote by $>$ and $\succ$ the \emph{lexicographical total order} and the \emph{reverse lexicographical total order} on $\bM$ (or $\bM_1$) respectively, defined as follows:

    \subno{1} for any $\bi,\bj\in\bM$ (or $\bM_1$),
    \begin{equation*}
        \bj>\bi\ \Longleftrightarrow\ \text{there exists }r\in\bZ_+ \text{ such that }(j_s=i_s,\ \forall\ s> r) \text{ and }j_r>i_r;
    \end{equation*}

    \subno{2} for any $\bi,\bj\in\bM$ (or $\bM_1$),
    \begin{equation*}
        \bj\succ\bi\ \Longleftrightarrow\ \text{there exists }r\in\bZ_+ \text{ such that }(j_s=i_s,\ \forall\ 1\le s<r) \text{ and }j_r>i_r.
    \end{equation*}
\end{definition}

Obviously, we can extend the above total order to \emph{principal total order} on $\bM\times\bM_1\times\bM$, still denoted by $\succ$:
\begin{align*}
    (\bi_1,\bj_1,\bk_1)\succ(\bi_2,\bj_2,\bk_2)\Longleftrightarrow &\ (\bk_1,\bw(\bk_1))\succ(\bk_2,\bw(\bk_2))\ \text{ or }\\
    &\ \bk_1=\bk_2\ \text{ and }\ (\bj_1,\bw(\bj_1))\succ(\bj_2,\bw(\bj_2))\ \text{ or }\\
    &\ \bk_1=\bk_2,\ \bj_1=\bj_2\ \text{ and }\ \bi_1>\bi_2,
\end{align*}
for any $(\bi_1,\bj_1,\bk_1), (\bi_2,\bj_2,\bk_2)\in\bM\times\bM_1\times\bM$. Denote $\bM\times\bM_1\times\bM$ by $\bar{\bM}$.

\section{Construction of simple restricted $\cS$-modules}\label{sect:construction}
\hspace{1.5em}In this section, we present the construction of simple restricted modules over the Lie superalgebra $\cS$. These modules are obtained from simple modules over certain subalgebras of $\cS$.

For convenience, define the following subalgebra of $\cS$,
\begin{align*}
    \cT_{d}=\bigoplus_{i\ge 0}(\bC L_i\oplus\bC W_{i-d})\oplus\bigoplus_{i\ge 1}\bC G_{i-\frac{1}{2}}\oplus\bC C_1\oplus\bC C_2,
\end{align*}
where $d\in\bN$.

Letting $V$ be a simple $\cT_{d}$-module, we have the induced $\cS$-module
\begin{equation*}
    \Ind(V)=\cU(\cS)\otimes_{\cU(\cT_{d})}V.
\end{equation*}
Since we usually consider simple modules for the algebra $\cS$ or one of its subalgebras containing the central elements $C_i$, $i=1,2$, we always suppose that the action of $C_i$ is the scalar $c_i$ for $i=1,2$.

Fix $d\in\bN$ and let $V$ be a simple $\cT_{d}$-module. For $\bi,\bk\in\bM$, $\bj\in\bM_1$, we denote
\begin{equation*}
    W^\bi G^\bj L^\bk=\cdots W_{-d-2}^{i_2}W_{-d-1}^{i_1}\cdots G_{-\frac{3}{2}}^{j_2}G_{-\frac{1}{2}}^{j_1}\cdots L_{-2}^{k_2}L_{-1}^{k_1}\in \cU(\cS).
\end{equation*}
According to the PBW Theorem and $G_{i-\frac{1}{2}}^2V=(i-\frac{1}{2})W_{2i-1}V$ for $i\in\bZ$, every element of $\Ind(V)$ can be uniquely written in the following form
\begin{equation}\label{PBW}
    \sum_{(\bi,\bj,\bk)\in\bar{\bM}}W^\bi G^\bj L^\bk\vijk,
\end{equation}
where all $\vijk\in V$ and only finitely many of them are nonzero. For any $v\in\Ind(V)$ as in \eqref{PBW}, we denote by $\supp(v)$ the set of all $(\bi,\bj,\bk)\in\bar{\bM}$ such that $\vijk\neq 0$. For a nonzero $v\in\Ind(V)$, we write $\deg(v)$ the maximal element in $\supp(v)$ with respect to the principal total order on $\bar{\bM}$, which is called the \emph{degree} of $v$. Note that here and later we make the convention that $\deg(v)$ is defined only for $v\neq 0$.

\begin{theorem}\label{thm:simple.restricted.module}
    Let $d\in\bN$ and $V$ be a simple $\cT_{d}$-module and suppose there exists $t\in\bN$ satisfying the following two conditions\textup{:}
    
    \subno{a} the action of $W_t$ on $V$ is injective if $t>0$ or $c_2\neq 0$ if $t=0$\textup{;}

    \subno{b} $W_i V=0$ for all $i>t$ and $L_j V=0$ for all $j>t+d$.
    \newline
    Then we have

    \subno{i} $G_{i-\frac{1}{2}}V=0$ for all $i>t$\textup{;}

    \subno{ii} $\Ind(V)$ is a simple $\cS$-module.
\end{theorem}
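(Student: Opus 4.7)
The proof splits into two claims, and I would handle (i) first since it is used in (ii). For (i), introduce
\[
V' := \{v \in V : G_{i-1/2}v = 0 \text{ for all } i > t\},
\]
and aim to show $V'$ is a nonzero $\cT_d$-submodule; simplicity of $V$ then forces $V' = V$. Stability follows from the super-Leibniz identity $G_{i-1/2}(xv) = [G_{i-1/2}, x]v$ for $v \in V'$: one checks that $[G_{i-1/2}, L_j] = -(i-\tfrac{1}{2})G_{j+i-1/2}$ lies in the defining family (since $j+i > t$), that $[G_{i-1/2}, G_{j-1/2}] = (i+j-1)W_{i+j-1}$ annihilates $V$ by (b) (since $i+j-1 > t$), and that the remaining brackets vanish identically. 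For $V' \neq 0$, the bracket $[L_m, G_r] = rG_{m+r}$ with $m > t+d$ (so $L_m V = 0$) gives $G_{i-1/2}V = 0$ for all $i \geq t+d+2$, so $S(v) := \{i > t : G_{i-1/2}v \neq 0\}$ is a finite subset of $\{t+1, \dots, t+d+1\}$ for every $v \in V$. Using $G_{i-1/2}^2 = (i-\tfrac{1}{2})W_{2i-1}$ and $\{G_{i-1/2}, G_{j-1/2}\} = (i+j-1)W_{i+j-1}$, both zero on $V$ when $i, j > t$, the replacement $v \mapsto v_1 := G_{i_0-1/2}v$ with $i_0 := \min S(v)$ produces a nonzero vector with $S(v_1) \subsetneq S(v)$ ($i_0$ leaves since $G_{i_0-1/2}^2 v = 0$, and no new index enters since $G_{i-1/2}v_1 = -G_{i_0-1/2}G_{i-1/2}v$). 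Finitely many iterations land in $V' \setminus \{0\}$.

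For (ii), given $0 \neq w \in \Ind(V)$ expanded in the PBW form $\sum W^\bi G^\bj L^\bk \vijk$, it suffices to show $\cU(\cS)w \cap V \neq 0$: then $V \subseteq \cU(\cS)w$ by simplicity of $V$ over $\cT_d$, and $\cU(\cS)w = \Ind(V)$ follows since $V$ generates $\Ind(V)$ as a $\cU(\cS)$-module. I would argue by induction on $\deg(w)$ under the principal order $\succ$ on $\bar{\bM}$. If $\deg(w) = (\bO, \bO, \bO)$ then $w \in V$. Otherwise, I construct $x \in \cS$ supported in positive grading such that $0 \neq xw \in \cU(\cS)w$ and $\deg(xw) \prec \deg(w)$, then iterate. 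Depending on which layer of $\deg(w)$ is active, $x$ is chosen as a suitable $L_m$ (lowering $L^\bk$), a suitable $G_r$ with $r > 0$ (lowering $G^\bj$ while preserving $L^\bk$), or $L_{t+d+n}$ exploiting $[L_{t+d+n}, W_{-d-n}] = tW_t$ if $t > 0$ (respectively $[L_{d+n}, W_{-d-n}] = C_2$ if $t = 0$). Hypothesis (a) enters precisely at this $W^\bi$-reduction: the leading coefficient produced is $tW_t$ acting on $V$, which is nonzero since $W_t$ is injective when $t > 0$; or it is $c_2 \neq 0$ when $t = 0$. So the reduction does not collapse.

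The main obstacle is the combinatorial bookkeeping in the degree-reduction step. When $L_m$ hits a PBW monomial $W^\bi G^\bj L^\bk \vijk$, the super-Leibniz rule produces cross-terms from every bracket $[L_m, L_{-\ell}]$, $[L_m, W_{-d-\ell}]$, $[L_m, G_{-\ell+1/2}]$, including central contributions like $\frac{m^3 - m}{12} C_1$ when $m = \ell$. One must verify that each such cross-term is strictly $\prec \deg(w)$ in the principal order while the intended leading monomial does not cancel in coefficient. The layered structure of $\succ$ (prioritizing $\bk$, then $\bj$, then $\bi$) is arranged precisely so that these estimates close, and the vanishing $G_{i-1/2}V = 0$ for $i > t$ from (i) is used to eliminate the $G$-type boundary terms that would otherwise interfere with the $W$-reduction. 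Once this bookkeeping is carried out carefully, the induction on $\deg$ terminates at a nonzero element of $V$ and the theorem follows.
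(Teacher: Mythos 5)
Your argument for part (i) is correct and takes a different (dual) route from the paper's: the paper considers the image $U=G_{i-\frac{1}{2}}V$, checks it is a $\cT_d$-submodule, and notes it is proper because $G_{i-\frac{1}{2}}^2V=(i-\tfrac12)W_{2i-1}V=0$; you instead work with the joint kernel $V'$, and your finiteness of $S(v)\subseteq\{t+1,\dots,t+d+1\}$ together with the shrinking step $v\mapsto G_{i_0-\frac12}v$ does show $V'\neq 0$. Both are fine; the paper's is shorter.

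Part (ii), however, has a genuine gap in the reduction of the $L^{\bk}$-layer. You propose to lower $L^{\bk}$ with ``a suitable $L_m$.'' The only brackets available there are $[L_m,L_{-\ell}]=(-\ell-m)L_{m-\ell}+\frac{m^3-m}{12}\delta_{m,\ell}C_1$, so the leading coefficient your reduction produces is the action on $V$ of some $L_{m-\hk}$ with $0\le m-\hk\le t+d$ (or the scalar $\frac{m^3-m}{12}c_1$), and hypothesis (a) says nothing about these operators: they may act non-injectively or as zero, in which case $L_mw$ either vanishes or fails to have the predicted degree, and the induction stalls. Concretely, take $d=t=0$, $V=\bC[x]$ with $L_0=c_2\frac{d}{dx}$, $W_0=x$, $c_2\neq0$, and all other generators of $\cT_0$ acting by zero (this is a simple $\cT_0$-module satisfying (a) and (b)); for $w=L_{-1}\otimes 1$ one checks $L_mw=0$ for all $m\ge1$ and $L_0w=-w$, so no $L_m$ reduces the degree of $w$, whereas $W_1w=-c_2(1\otimes 1)\neq0$. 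The paper's choice for this layer is $W_{\hk+t}$, precisely because $[W_{\hk+t},L_{-\hk}]=-(tW_t+\delta_{t,0}C_2)$ is the operator that (a) declares injective or a nonzero scalar. Relatedly, your claim that (a) enters ``precisely at the $W^{\bi}$-reduction'' is not right: it is needed at all three layers, since $[W_{\hk+t},L_{-\hk}]$, $[G_{\hj+t-\frac12},G_{-\hj+\frac12}]$ and $[L_{\ci+t+d},W_{-\ci-d}]$ all equal $\pm(tW_t+\delta_{t,0}C_2)$. Your $G^{\bj}$- and $W^{\bi}$-steps are the paper's once ``suitable'' is pinned down, but the $L^{\bk}$-step must be repaired by replacing $L_m$ with $W_{\hk+t}$.
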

\begin{proof} (i) For $i>t\ge 0$, we have $G_{i-\frac{1}{2}}^2V=(i-\frac{1}{2})W_{2i-1}V=0$ by $[G_{i-\frac{1}{2}},G_{i-\frac{1}{2}}]=(2i-1)W_{2i-1}$ and (b). If $G_{i-\frac{1}{2}}V=0$, then we are done. Otherwise, we can consider the proper subspace $U=G_{i-\frac{1}{2}}V$ of $V$ and deduce that
\begin{align*}
    &G_{j-\frac{1}{2}}U=G_{j-\frac{1}{2}}G_{i-\frac{1}{2}}V=(i+j-1)W_{i+j-1}V-G_{i-\frac{1}{2}}G_{j-\frac{1}{2}}V\subseteq G_{i-\frac{1}{2}}V=U,\ \ \forall\ j\in\bZ_+,\\
    &L_m U=L_m G_{i-\frac{1}{2}}V=(i-\frac{1}{2})G_{m+i-\frac{1}{2}}V+G_{i-\frac{1}{2}}L_mV\subseteq U,\ \ \forall\ m\in\bN,\\
    &W_nU=W_n G_{i-\frac{1}{2}}V=G_{i-\frac{1}{2}}W_nV\subseteq U,\ \ \forall\ n\in\bZ,\ n\ge -d.
\end{align*}
It follows that $U$ is a proper $\cT_d$-submodule of $V$. Then $U=G_{i-\frac{1}{2}}V=0$ for $i> t$ since $V$ is simple.

    (ii) In order to prove this conclusion, we need the following claim.
    \begin{claim}
        For any $v\in\Ind(V)\setminus V$, let $\deg(v)=(\bi,\bj,\bk)$, $\check{i}=\max\{s:i_s\neq 0\}$ if $\bi\neq\bO$, $\hat{j}=\min\{s:j_s\neq 0\}$ if $\bj\neq\bO$ and $\hk=\min\{s:k_s\neq 0\}$ if $\bk\neq\bO$. Then we can obtain

        \subno{1} if $\bk\neq\bO$, then $\hk>0$ and $\deg(W_{\hk+t}v)=(\bi,\bj,\bk'')$\textup{;}

        \subno{2} if $\bk=\bO,\bj\neq\bO$, then $\hj>0$ and $\deg(G_{\hj+t-\frac{1}{2}}v)=(\bi,\bj'',\bO)$\textup{;}

        \subno{3} if $\bj=\bk=\bO,\bi\neq\bO$, then $\ci>0$ and $\deg(L_{\ci+t+d}v)=(\bi',\bO,\bO)$.
    \end{claim}

    To prove this, we assume that $v$ is of the form in \eqref{PBW}. It is enough to show what we want by comparing the degree.

    \subno{1} Consider those $\vxyz$ with $W_{\hk+t}W^\bx G^\by L^\bz\vxyz\neq 0$. Note that $W_{\hk+t}\vxyz=0$ for any $(\bx,\by,\bz)\in\supp(v)$. It is easy to check that 
    \begin{equation*}
        W_{\hk+t}W^\bx G^\by L^\bz\vxyz=W^\bx G^\by [W_{\hk+t},L^\bz]\vxyz.
    \end{equation*}
    Clearly, $W_t\vxyz\neq 0$ or $c_2\vxyz\neq0$ by (a). If $\bz=\bk$, one can get that
    \begin{equation*}
        \deg(W_{\hk+t}W^\bx G^\by L^\bz\vxyz)=(\bx,\by,\bz'')\preceq(\bi,\bj,\bk''),
    \end{equation*}
    where the equality holds if and only if $\by=\bj,\bx=\bi$.

    Now, we assume that $(\bz,\bw(\bz))\prec(\bk,\bw(\bk))$ and denote
    \begin{equation*}
        \deg(W_{\hk+t}W^\bx G^\by L^\bz\vxyz)=(\bx_1,\by_1,\bz_1)\in\bar{\bM}.
    \end{equation*}
    If $\bw(\bz)<\bw(\bk)$, then we obtain $\bw(\bz_1)\le\bw(\bz)-\hk<\bw(\bk)-\hk=\bw(\bk'')$, which leads to $(\bx_1,\by_1,\bz_1)\prec(\bi,\bj,\bk'')$. Then we discuss the case $\bw(\bz)=\bw(\bk)$ and $\bz\prec\bk$. Note that here $\bz\neq\bO$, otherwise $\bw(\bz)=0<\bw(\bk)$ since $\bk\neq\bO$. Let $\hat{z}=\min\{s:z_s\neq 0\}>0$. If $\hat{z}>\hat{k}$, then $\bw(\bz_1)<\bw(\bz)-\hk=\bw(\bk'')$. If $\hat{z}=\hk$, we can similarly deduce that $(\bx_1,\by_1,\bz_1)=(\bx,\by,\bz'')$. Then by $\bz''\prec\bk''$, we have 
    \begin{equation*}
        \deg(W_{\hk+t}W^\bx G^\by L^\bz\vxyz)=(\bx_1,\by_1,\bz_1)\prec(\bi,\bj,\bk'')
    \end{equation*}
    in both cases.

    Hence, by combining all the arguments above, we can conclude that $\deg(W_{\hk+t}v)=(\bi,\bj,\bk'')$ as desired.

    \subno{2} Now we consider $\vxy$ with $G_{\hj+t-\frac{1}{2}}W^\bx G^\by\vxy\neq 0$. Since $G_{\hj+t-\frac{1}{2}}\vxy=0$ for any $(\bx,\by,\bO)\in\supp(v)$, it follows that
    \begin{align*}
        G_{\hj+t-\frac{1}{2}}W^\bx G^\by\vxy=W^\bx [G_{\hj+t-\frac{1}{2}},G^{\by}]\vxy.
    \end{align*}
    Note that $W_t\vxy\neq 0$ or $c_2\vxy\neq0$ by (a). If $\by=\bj$, one can get that
    \begin{equation*}
        \deg(G_{\hj+t-\frac{1}{2}}W^\bx G^\by\vxy)=(\bx,\by'',\bO)\preceq(\bi,\bj'',\bO),
    \end{equation*}
    and the equality holds if and only if $\bx=\bi$.

    Now suppose that $(\by,\bw(\by))\prec(\bj,\bw(\bj))$ and denote 
    \begin{equation*}
        \deg(G_{\hj+t-\frac{1}{2}}W^\bx G^\by\vxy)=(\bx_1,\by_1,\bO)\in\bar{\bM}.
    \end{equation*}
    If $\bw(\by)<\bw(\bj)$, then we have $\bw(\by_1)\le\bw(\by)-\hj<\bw(\bj)-\hj=\bw(\bj'')$, which gives rise to $(\bx_1,\by_1,\bO)\prec(\bi,\bj'',\bO)$. Now we consider the case $\bw(\by)=\bw(\bj)$ and $\by\prec\bj$, which implies that $\by\neq\bO$, otherwise $\bw(\by)=0<\bw(\bj)$ since $\bj\neq\bO$. Let $\hat{y}=\min\{s:y_s\neq 0\}>0$. If $\hat{y}>\hat{j}$, then we have $\bw(\by_1)<\bw(\by)-\hj=\bw(\bj'')$. If $\hat{y}=\hj$, one can similarly check that $(\bx_1,\by_1,\bO)=(\bx,\by'',\bO)$. By the fact that $\by''\prec\bj''$, in both cases, we can get 
    \begin{equation*}
        \deg(G_{\hj+t-\frac{1}{2}}W^\bx G^\by\vxy)=(\bx_1,\by_1,\bO)\prec(\bi,\bj'',\bO).
    \end{equation*}

    Therefore, we can conclude that $\deg(G_{\hj+t-\frac{1}{2}}v)=(\bi,\bj'',\bO)$.

    \subno{3} Similarly, consider those $\vx$ with $L_{\ci+t+d}W^\bx\vx\neq 0$. Set $\check{x}=\max\{s:x_s\neq 0\}$, then $0<\check{x}\le\ci$. If $\check{x}<\ci$, then it is easy to obtain that $\deg(L_{\ci+t+d}W^\bx\vx)\prec(\bi',\bO,\bO)$. If $\check{x}=\ci$, we have
    \begin{align*}
        L_{\ci+t+d}W^\bx\vx&=L_{\check{x}+t+d}W^{x_{\check{x}}}_{-\check{x}-d} W_{-\check{x}+1-d}^{x_{\check{x}-1}}\cdots W_{-1-d}^{x_1}\vx\\
        &=[L_{\check{x}+t+d},W_{-\check{x}-d}^{x_{\check{x}}}]W_{-\check{x}+1-d}^{x_{\check{x}-1}}\cdots W_{-1-d}^{x_1}\vx,\\
        &=\begin{cases}
            tx_{\check{x}}W^{x_{\check{x}}-1}_{-\check{x}-d} W_{-\check{x}+1-d}^{x_{\check{x}-1}}\cdots W_{-1-d}^{x_1}W_t\vx,\ &t>0,\\
            c_2x_{\check{x}}W^{x_{\check{x}}-1}_{-\check{x}-d} W_{-\check{x}+1-d}^{x_{\check{x}-1}}\cdots W_{-1-d}^{x_1}\vx,\ &t=0.
        \end{cases}
    \end{align*}
     This together with (a) implies that
     \begin{equation*}
         \deg(L_{\ci+t+d}W^\bx\vx)\preceq(\bi',0,0),
     \end{equation*}
     where the equality holds if and only if $\bx=\bi$. As a result, Claim 1 has been proved. 

    Using Claim 1 repeatedly, from any nonzero element $v\in\Ind(V)$ we can reach a nonzero element in $\cU(\cS)v\cap V\neq 0$, which indicates the simplicity of $\Ind(V)$.
\end{proof}

\begin{remark}
    It is clear that the induced module $\Ind(V)$ is a simple restricted $\cS$-module.
\end{remark}

In addition, the above conclusions also hold if we do not assume the simplicity of $V$. We have the following corollary.
\begin{corollary}
    Letting $d,t$ and $V$ be as in Theorem~\ref{thm:simple.restricted.module} except that $V$ may not be simple as an $\cT_d$-module, then we have
    \begin{equation*}
        V=\{v\in\Ind(V)\ |\ W_i v=G_{j-\frac{1}{2}}v=L_k v=0,\ \forall\ i,j>t,\ k>t+d\}.
    \end{equation*}
\end{corollary}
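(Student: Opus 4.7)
The plan is to reuse Claim~1 from the proof of Theorem~\ref{thm:simple.restricted.module}, observing that its verification never used the simplicity of $V$: only hypotheses (a) and (b), together with the bracket relations~\eqref{e.rel} and the PBW basis of $\Ind(V)$, entered its derivation. Hence Claim~1 remains available in our more general setting.

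For the inclusion ``$\supseteq$'', I would take a nonzero $v\in\Ind(V)$ annihilated by every $W_i$, $G_{j-\frac12}$, $L_k$ with $i,j>t$ and $k>t+d$, and argue by contradiction that $v\in V$. If instead $v\in\Ind(V)\setminus V$, then $\deg(v)=(\bi,\bj,\bk)\in\bar{\bM}$ has at least one nonzero component, and the appropriate case of Claim~1 produces one of $W_{\hk+t}$, $G_{\hj+t-\frac12}$, $L_{\ci+t+d}$ whose action on $v$ is nonzero of strictly smaller principal degree. Since $\hk,\hj,\ci>0$, these operators carry indices exceeding $t$, $t$, $t+d$ respectively, so they belong to the annihilating family---a contradiction.

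For the inclusion ``$\subseteq$'', I would fix $v\in V$. Condition (b) directly yields $W_iv=0$ for $i>t$ and $L_kv=0$ for $k>t+d$. For the remaining assertion $G_{j-\frac12}v=0$ with $j>t$, the argument of Theorem~\ref{thm:simple.restricted.module}(i) must be adapted: I would verify that $V^0:=\bigcap_{j>t}\ker\bigl(G_{j-\frac12}|_V\bigr)$ is a $\cT_d$-submodule of $V$, using the brackets $[L_m,G_{j-\frac12}]=(j-\frac12)G_{m+j-\frac12}$, $[W_n,G_{j-\frac12}]=0$, and $[G_{s-\frac12},G_{j-\frac12}]=(s+j-1)W_{s+j-1}$ together with $W_{s+j-1}V=0$ for $s+j-1>t$.

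The main obstacle is concluding $V^0=V$ without simplicity. The original proof of part~(i) used simplicity to rule out the candidate submodule $U=G_{j-\frac12}V$; here one must instead observe that $V/V^0$ is killed by every $G_{j-\frac12}$ with $j>t$ and leverage condition (a)---namely the injectivity of $W_t$ (or $c_2\neq 0$ when $t=0$) combined with $[W_t,G_{j-\frac12}]=0$---to force the vanishing of $G_{j-\frac12}$ throughout $V$. With $V^0=V$ in hand, the two inclusions combine to yield the stated equality.
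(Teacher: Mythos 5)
Your overall architecture is the one the paper intends (the corollary is stated without proof, as a byproduct of Claim~1): Claim~1 yields ``$\supseteq$'', and condition (b) together with an analogue of Theorem~\ref{thm:simple.restricted.module}(i) yields ``$\subseteq$''. However, your premise that Claim~1 ``never used the simplicity of $V$'' is not accurate: part (2) of Claim~1 starts from $G_{\hj+t-\frac12}\vxy=0$ for all $(\bx,\by,\bO)\in\supp(v)$, which is precisely conclusion (i) ($G_{j-\frac12}V=0$ for $j>t$), and the paper's proof of (i) uses simplicity to rule out the proper submodule $U=G_{i-\frac12}V$. So both of your inclusions ultimately hinge on establishing (i) without simplicity, and that is exactly the step you leave open.

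The proposed fix for that step is the genuine gap: injectivity of $W_t$ (or $c_2\neq0$) combined with $[W_t,G_{j-\frac12}]=0$ cannot force $G_{j-\frac12}V=0$, since commuting with an injective operator imposes no vanishing; and indeed $G_{j-\frac12}V=0$ can fail for non-simple $V$ satisfying (a) and (b). Concretely, take $t=d=0$ and $V=\bC[x]\otimes\Lambda(\xi)$, with $W_0$ acting as multiplication by $x$, $L_0=c_2\frac{d}{dx}+\frac12\xi\partial_\xi$, $G_{\frac12}$ acting as multiplication by $\xi$, $C_2=c_2\neq0$, and $L_i,W_i$ ($i\ge1$), $G_{i-\frac12}$ ($i\ge2$) acting by zero; these elements span an ideal of $\cT_0$ and the surviving relations $[L_0,W_0]=C_2$, $[L_0,G_{\frac12}]=\frac12G_{\frac12}$, $[W_0,G_{\frac12}]=0$, $[G_{\frac12},G_{\frac12}]=0$ are all satisfied, so $V$ is a $\cT_0$-module obeying (a) and (b). It is not simple ($\bC[x]\xi$ is a proper submodule), and $G_{\frac12}(1\otimes1)=\xi\neq0$, so $1\otimes1\in V$ does not lie in the right-hand side: the inclusion ``$\subseteq$'' fails. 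Hence no argument can close your gap as stated; the corollary needs the additional hypothesis $G_{j-\frac12}V=0$ for all $j>t$ (conclusion (i) imposed as an assumption on $V$). With that hypothesis added, your ``$\supseteq$'' argument via Claim~1 is correct and the ``$\subseteq$'' direction is immediate.
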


\section{Characterization of simple restricted modules}\label{sect:char}
\hspace{1.5em}In this section, we give a precise characterization of simple restricted $\cS$-module under certain conditions. For any $r_1,r_2,r_3\in\bN$, set 
\begin{equation*}
    \cS^{(r_1,r_2,r_3)}=\bigoplus_{i\ge r_1}\bC L_i\oplus\bigoplus_{i\ge r_2}\bC W_i\oplus\bigoplus_{i\ge r_3}\bC(1-\delta_{i,0}) G_{i-\frac{1}{2}}.
\end{equation*}
It is straightforward to check that $\cS^{(r_1,r_2,r_3)}$ is a finitely generated subalgebra of $\cS$.

First, we show several equivalent conditions for simple restricted $\cS$-modules.

\begin{proposition}\label{prop:eq.cods}
    Let $P$ be a simple $\cS$-module. Then the following conditions are equivalent\textup{:}

    \subno{1} There exists $t\in\bN$ such that the actions of $L_i,W_i,(1-\delta_{i,0})G_{i-\frac{1}{2}}$, $i\ge t$ on $P$ are locally finite.

    \subno{2} There exists $t\in\bN$ such that the actions of $L_i,W_i,(1-\delta_{i,0})G_{i-\frac{1}{2}}$, $i\ge t$ on $P$ are locally nilpotent.

    \subno{3} There exist $r_1,r_2,r_3\in\bN$ such that $P$ is a locally finite $\cS^{(r_1,r_2,r_3)}$-module.

    \subno{4} There exist $r_1,r_2,r_3\in\bN$ such that $P$ is a locally nilpotent $\cS^{(r_1,r_2,r_3)}$-module.

    \subno{5} There exist $r_1,r_2,r_3\in\bN$ and a nonzero vector $v\in P$ such that $\cS^{(r_1,r_2,r_3)}v=0$.

    \subno{6} $P$ is restricted.
\end{proposition}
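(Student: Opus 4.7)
My plan is to arrange the six conditions into a cycle of implications. The routine ones are $(2)\Rightarrow(1)$, $(4)\Rightarrow(3)$, $(4)\Rightarrow(2)$, and $(3)\Rightarrow(1)$: local nilpotence implies local finiteness, and the generators $L_i,W_i,(1-\delta_{i,0})G_{i-\frac{1}{2}}$ for $i\ge\max(r_1,r_2,r_3)$ all belong to $\cS^{(r_1,r_2,r_3)}$. The real content lies in the equivalence $(6)\Leftrightarrow(5)$, the strengthening $(5)\Rightarrow(4)$, and the closing implication $(1)\Rightarrow(5)$.

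For $(6)\Rightarrow(5)$, pick any nonzero $v_0\in P$; restrictedness produces $N$ with $L_iv_0=W_iv_0=G_{i-\frac{1}{2}}v_0=0$ for $i\ge N$, so $\cS^{(N,N,N)}v_0=0$. For $(5)\Rightarrow(6)$, let $\cS^{(r_1,r_2,r_3)}v=0$ with $v\ne 0$; by simplicity $P=\cU(\cS)v$, and any element is of the form $uv$ for a PBW monomial $u$ whose generators have indices bounded below by some $-N=-N(u)$. For $n\ge r_1$ one has $L_n(uv)=(\ad(L_n)(u))v$ since $L_nv=0$; iterating the super-Leibniz rule with the bracket relations in \eqref{e.rel}, each of which raises an index by $n$, shows that for $n$ large enough every resulting monomial carries a factor in $\cS^{(r_1,r_2,r_3)}$ acting on $v$, hence vanishes. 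The identical argument for $W_n$ and $G_{n-\frac{1}{2}}$ proves restrictedness. A bookkeeping extension of the same estimate to products of arbitrarily many elements of $\cS^{(r_1',r_2',r_3')}$ for $r_j'\gg r_j$ yields $(5)\Rightarrow(4)$.

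The main obstacle is $(1)\Rightarrow(5)$. A clean framing is to define $\Sigma=\{v\in P\mid \cS^{(r_1',r_2',r_3')}v=0\text{ for some }r_j'\}$; a direct commutator calculation using the brackets \eqref{e.rel} shows that $\Sigma$ is $\cS$-invariant, since given $v\in\Sigma$ and $x\in\cS$ of index $k$, any $y\in\cS^{(r')}$ with $r'$ chosen larger than $r+|k|$ satisfies $y(xv)=x(yv)+[y,x]v=0$ because both $yv=0$ and $[y,x]\in\cS^{(r)}$. Thus $\Sigma$ is a submodule, and simplicity forces $\Sigma=0$ or $\Sigma=P$, the latter being exactly $(6)$. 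It therefore suffices to produce, under hypothesis $(1)$, a single nonzero vector in $\Sigma$.

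My plan for this last step is: fix any nonzero $v_0\in P$ and, using local finiteness of the generators $L_i,W_i,G_{i-\frac{1}{2}}$ for $i\ge t$ together with the index-raising bracket structure, construct a finite-dimensional subspace $U\ni v_0$ closed under all of $\cS^{(t,t,t+1)}$; the image of $\cS^{(t,t,t+1)}$ in $\End(U)$ is then a finite-dimensional Lie superalgebra whose descending filtration by the images of $\cS^{(t+k,t+k,t+k+1)}$ must stabilize and, thanks to the strict index growth in \eqref{e.rel}, act nilpotently on $U$, so Engel's theorem for Lie superalgebras produces a common zero in $U$, yielding an element of $\Sigma$. The most delicate step is the construction of $U$: naive local finiteness of each individual $L_i$ gives only that $\bC[L_i]v_0$ is finite-dimensional, and combining these via the bracket relations into a single finite-dimensional $\cS^{(t,t,t+1)}$-invariant subspace requires a careful multi-indexed induction on PBW-monomial length. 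This technicality is where I expect the bulk of the work to lie.
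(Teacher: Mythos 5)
Your routine implications, the $(6)\Rightarrow(5)$ step, and the $\Sigma$-submodule framing are fine; indeed $\Sigma$ gives a cleaner package than the paper, which instead proves $(5)\Rightarrow(6)$ by a separate induction on the degree of PBW monomials (its Claim~3). But the heart of the proposition is $(1)\Rightarrow(5)$, and there your argument has two genuine gaps, both at exactly the points you flag as ``where the work lies.''

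First, the finite-dimensional $\cS^{(t,t,t)}$-invariant subspace $U$ cannot be produced by ``combining local finiteness of the individual generators via induction on PBW-monomial length.'' There are infinitely many generators $L_i,W_i,G_{i-\frac12}$ with $i\ge t$, and knowing that each $\cU(\bC L_i)v_0$ is finite-dimensional says nothing about $\sum_{i\ge t}\bC L_iv_0$ being finite-dimensional. The missing idea (the Mazorchuk--Zhao trick the paper uses) is to first replace $v_0$ by an eigenvector $v$ of $L_t$ (possible by local finiteness of $L_t$ alone, with $t\ge1$ after enlarging $t$), and then exploit the recursion $\bigl(j+(m-1)t\bigr)L_{j+(m+1)t}v=(L_t-\lambda)L_{j+mt}v$ (and its $W$- and $G$-analogues) to trap all of $\sum_{i}\bC L_{t+i}v$, $\sum_i\bC W_{t+i}v$, $\sum_i\bC G_{t+i-\frac12}v$ inside finitely many spaces of the form $\cU(\bC L_t)L_jv$, which are finite-dimensional by hypothesis. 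Second, the claim that the stabilized image $\mathfrak{h}_\infty$ of the filtration $\cS^{(t+k,t+k,t+k+1)}$ in $\End(U)$ ``acts nilpotently thanks to the strict index growth'' is unsupported: the index growth only yields $[\mathfrak{h}_j,\mathfrak{h}_k]\subseteq\mathfrak{h}_{j+k}$, hence $[\mathfrak{h}_\infty,\mathfrak{h}_\infty]\subseteq\mathfrak{h}_\infty$, which carries no nilpotency information, and $L_t$ itself typically acts with a nonzero eigenvalue. The paper closes this gap by a different device: take a minimal-length relation $(L_m+a_1L_{m+1}+\cdots+a_nL_{m+n})U=0$ (which exists because $U$ is finite-dimensional and invariant), bracket it with $L_m$ to produce a strictly shorter relation and force $n=0$, i.e.\ $L_mU=0$, and then propagate via $[L_i,L_m]$, $[W_j,L_m]$, $[G_{k-\frac12},L_m]$ to kill all of $\cS^{(r_1',r_2',r_3')}$ on $U$. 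Without these two ingredients your outline does not constitute a proof of $(1)\Rightarrow(5)$.
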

\begin{proof}
    Note that $(4)\Rightarrow(2)\Rightarrow(1)$, $(3)\Rightarrow(1)$ and $(6)\Rightarrow(5)$ are easy to get. Since $\cS^{(r_1,r_2,r_3)}$ is finitely generated, we have $(4)\Rightarrow(3)$. It is also not difficult to obtain $(5)\Rightarrow(4)$. Assume (5) holds. Since $P$ is a simple $\cS$-module, we have $P=\cU(\cS)v$ for the vector $v$ in (5). Then $P$ is a locally nilpotent $\cS^{(r'_1,r'_2,r'_3)}$-module by the PBW Theorem and the Lie super-brackets \eqref{e.rel}, for some $r'_1,r'_2,r'_3\in\bN$. Actually, we only need to show that $(1)\Rightarrow(5)\Rightarrow(6)$.
    
    Now we prove $(1)\Rightarrow(5)$. Suppose that $P$ is a simple $\cS$-module and there exists $t\in\bN$ such that the actions of $L_i,W_i,(1-\delta_{i,0})G_{i-\frac{1}{2}}$ are locally finite for all $i\ge t$. Then we can choose a nonzero $v\in P$ such that $L_t v=\lambda v$ for some $\lambda\in\bC$.

    Take any $j\in\bN$ with $j>t$ and denote
    \begin{align*}
        N_L&=\sum_{m\in\bN}\bC L_t^m L_j v=\cU(\bC L_t)L_j v,\\
        N_W&=\sum_{m\in\bN}\bC L_t^m W_j v=\cU(\bC L_t)W_j v,\\
        N_G&=\sum_{m\in\bN}\bC L_t^m G_{j-\frac{1}{2}} v=\cU(\bC L_t)G_{j-\frac{1}{2}} v,
    \end{align*}
    which are all finite-dimensional. By the relation \eqref{e.rel}, for all $m\in\bN$, we have
    \begin{align*}
        \big(j+(m-1)t\big)L_{j+(m+1)t}v=[L_t,L_{j+mt}]v=L_tL_{j+mt}v-L_{j+mt}L_tv=(L_t-\lambda)L_{j+mt}v.
    \end{align*}
    Similarly, we can obtain
    \begin{align*}
        \big(j+(m+1)t\big)W_{j+(m+1)t}v&=(L_t-\lambda)W_{j+mt}v,\\
        (j+mt-\frac{1}{2})G_{j+(m+1)t-\frac{1}{2}}v&=(L_t-\lambda)G_{j+mt-\frac{1}{2}}v,\ \ \forall\ m\in\bN.
    \end{align*}
    It follows that
    \begin{align*}
        &L_{j+mt}v\in N_L\Rightarrow L_{j+(m+1)t}v\in N_L,\ W_{j+mt}v\in N_W\Rightarrow W_{j+(m+1)t}v\in N_W,\\
        &G_{j+mt-\frac{1}{2}}v\in N_G\Rightarrow G_{j+(m+1)t-\frac{1}{2}}v\in N_G,\ \ \forall\ m\in\bN,\, j>t.
    \end{align*}
    Hence, by induction on $m$, we derive that
    \begin{equation*}
        L_{j+mt}v\in N_L,\ W_{j+mt}v\in N_W,\ G_{j+mt-\frac{1}{2}}v\in N_G,\ \ \forall\ m\in\bN.
    \end{equation*}
    In other words, we can conclude that $\sum_{m\in\bN}\bC L_{j+mt}v$, $\sum_{m\in\bN}\bC W_{j+mt}v$ and $\sum_{m\in\bN}\bC G_{j+mt-\frac{1}{2}}v$ are all finite-dimensional for $j>t$. Thus,
    \begin{align*}
        \sum_{i\in\bN}\bC L_{t+i}v&=\bC L_t v+\sum_{j=t+1}^{2t}(\sum_{m\in\bN}\bC L_{j+mt}v),\\
        \sum_{i\in\bN}\bC W_{t+i}v&=\bC W_t v+\sum_{j=t+1}^{2t}(\sum_{m\in\bN}\bC W_{j+mt}v),\\
        \sum_{i\in\bN}\bC G_{t+i-\frac{1}{2}}v&=\bC G_{t-\frac{1}{2}} v+\sum_{j=t+1}^{2t}(\sum_{m\in\bN}\bC G_{j+mt-\frac{1}{2}}v),
    \end{align*}
    are all finite-dimensional. Now, we can choose $l\in\bZ_+$ such that
    \begin{equation}\label{finite.sum}
        \sum_{i\in\bN}\bC L_{t+i}v=\sum_{i=0}^l\bC L_{t+i}v,\ \sum_{i\in\bN}\bC W_{t+i}v=\sum_{i=0}^l\bC W_{t+i}v,\ \sum_{i\in\bN}\bC G_{t+i-\frac{1}{2}}v=\sum_{i=0}^l\bC G_{t+i-\frac{1}{2}}v.
    \end{equation}
    Denote $V'=\sum_{x_0,\dots,x_l,z_0,\dots,z_l\in\bN,y_0,\dots,y_l\in\bY}\bC W_t^{x_0}\cdots W_{t+l}^{x_l}G_{t-\frac{1}{2}}^{y_0}\cdots G_{t+l-\frac{1}{2}}^{y_l}L_t^{z_0}\cdots L_{t+l}^{z_l}v$, where $\bY=\{0,1\}$. 

    \begin{claim}
        $V'$ is a finite-dimensional $\cS^{(t,t,t)}$-module.
    \end{claim}
    By (1), we know that $V'$ is finite-dimensional. Using the PBW Theorem, we can rewrite $L_{t+s}v'$, $W_{t+s}v'$, $G_{t+s-\frac{1}{2}}v'$ with $s\in\bN$ and $v'\in V'$ as a sum of vectors of the following form, respectively,
    \begin{subequations}\label{V'}
        \begin{align}
        &L_{t+s}W_t^{x_0}\cdots W_{t+l}^{x_l}G_{t-\frac{1}{2}}^{y_0}\cdots G_{t+l-\frac{1}{2}}^{y_l}L_t^{z_0}\cdots L_{t+l}^{z_l}v,\label{V'.L}\\
        &W_{t+s}W_t^{x_0}\cdots W_{t+l}^{x_l}G_{t-\frac{1}{2}}^{y_0}\cdots G_{t+l-\frac{1}{2}}^{y_l}L_t^{z_0}\cdots L_{t+l}^{z_l}v,\label{V'.W}\\
        &G_{t+s-\frac{1}{2}}W_t^{x_0}\cdots W_{t+l}^{x_l}G_{t-\frac{1}{2}}^{y_0}\cdots G_{t+l-\frac{1}{2}}^{y_l}L_t^{z_0}\cdots L_{t+l}^{z_l}v.\label{V'.G}
    \end{align}
    \end{subequations}
    By \eqref{finite.sum}, we only need to show that the elements in \eqref{V'.L}--\eqref{V'.G} with $0\le s\le l$ lie in $V'$. It is obvious for \eqref{V'.W}. For \eqref{V'.L}, we have
    \begin{align*}
        &\phantom{,=} L_{t+s}W_t^{x_0}\cdots W_{t+l}^{x_l}G_{t-\frac{1}{2}}^{y_0}\cdots G_{t+l-\frac{1}{2}}^{y_l}L_t^{z_0}\cdots L_{t+l}^{z_l}v\\
        &=W_t^{x_0}\cdots W_{t+l}^{x_l}G_{t-\frac{1}{2}}^{y_0}\cdots G_{t+l-\frac{1}{2}}^{y_l}L_t^{z_0}\cdots L_{t+s}^{z_s+1}\cdots L_{t+l}^{z_l}v\\
        &\quad+[L_{t+s},W_t^{x_0}\cdots W_{t+l}^{x_l}G_{t-\frac{1}{2}}^{y_0}\cdots G_{t+l-\frac{1}{2}}^{y_l}L_t^{z_0}\cdots L_{t+s}^{z_s}]L_{t+s+1}^{z_{s+1}}\cdots L_{t+l}^{z_l}v.
    \end{align*}
    By induction, one can easily check that all terms in above equality lie in $V'$. Hence, the elements of the form in \eqref{V'.L} lie in $V'$. The case of \eqref{V'.G} can be proved similarly. Therefore, we get Claim 2. 

    By Claim 2, we can take a minimal $n\in\bN$ such that
    \begin{equation}\label{finie.dim}
        (L_m+a_1L_{m+1}+\cdots+a_nL_{m+n})V'=0,
    \end{equation}
    for some $m\ge t$ and $a_i\in\bC$. Applying $L_m$ to \eqref{finie.dim}, we get
    \begin{equation*}
        (a_1[L_m,L_{m+1}]+\cdots+a_n[L_m,L_{m+n}])V'=0.
    \end{equation*}
    To avoid a contradiction, it forces that $n=0$, namely $L_m V'=0$. Hence, we obtain
    \begin{equation*}
        0=L_iL_mV'=[L_i,L_m]V'+L_m(L_iV')=(m-i)L_{m+i}V',\ \ \forall\ i\ge t.
    \end{equation*}
    So $L_{m+i}V'=0$ for all $i>m$. Then, for $j,k>m$, we have
    \begin{align*}
        0&=W_jL_mV'=[W_j,L_m]V'+L_m(W_jV')=-(m+j)W_{m+j}V',\\
        0&=G_{k-\frac{1}{2}}L_mV'=[G_{k-\frac{1}{2}},L_m]V'+L_m(G_{k-\frac{1}{2}}V')=(\frac{1}{2}-k)G_{m+k-\frac{1}{2}}V',
    \end{align*}
    that is, $W_{m+j}V'=G_{m+k-\frac{1}{2}}V'=0$. As a consequence, we get (5).

    For $(5)\Rightarrow(6)$, fix $r_1,r_2,r_3\in\bN$ and $0\neq v\in P$ such that $\cS^{(r_1,r_2,r_3)}v=0$. By the PBW Theorem and the simplicity of $P$, $P$ has a spanning set consisting of vectors of the form 
    \begin{equation*}
        L^\bi W^\bj G^\bk v=\cdots L_{r_1-2}^{i_2}L_{r_1-1}^{i_1}\cdots W_{r_2-2}^{j_2}W_{r_2-1}^{j_1}\cdots G_{r_3-\frac{3}{2}}^{k_2}G_{r_3-\frac{1}{2}}^{k_1}v,
    \end{equation*}
    where $\bi,\bj\in\bM$, $\bk\in\bM_1$.

    \begin{claim}
        There exists $n>N=r_1+r_2+r_3+\bw(\bi+\bj+\bk)$ such that
        \begin{equation*}
            L_nL^\bi W^\bj G^\bk v=W_nL^\bi W^\bj G^\bk v=G_{n-\frac{1}{2}}L^\bi W^\bj G^\bk v=0.
        \end{equation*}
    \end{claim}
    We prove the claim by induction on $D:=\bd(\bi+\bj+\bk)$. The case of $D=0$ is clear. For $D=1$, $L^\bi W^\bj G^\bk v$ is of the form $L_{r_1-b}v$, $W_{r_2-b}v$ or $G_{r_3+\frac{1}{2}-b}v$ for some $b\in\bZ_+$. Suppose $L^\bi W^\bj G^\bk v=G_{r_3+\frac{1}{2}-b}v$. For $n>r_1+r_2+r_3+b$, by assumption, we have
    \begin{align*}
        &G_{n-\frac{1}{2}}G_{r_3+\frac{1}{2}-b}v=[G_{n-\frac{1}{2}},G_{r_3+\frac{1}{2}-b}]v-G_{r_3+\frac{1}{2}-b}G_{n-\frac{1}{2}}v=0,\\
        &L_nG_{r_3+\frac{1}{2}-b}v=[L_n,G_{r_3+\frac{1}{2}-b}]v+G_{r_3+\frac{1}{2}-b}L_nv=0\ \text{ and }\ W_nG_{r_3+\frac{1}{2}-b}v=0.
    \end{align*}
    Similarly, we can obtain the results for the cases of $L_{r_1-b}v$ and $W_{r_2-b}v$. Now assume that $D>1$ and the claim holds for $D'<D$. Then for any $L^\bi W^\bj G^\bk v$ with $\bd(\bi+\bj+\bk)=D$, $n>N$, it follows that
    \begin{align*}
        L_nL^\bi W^\bj G^\bk v=&\sum_a\cdots L_{r_1-a-1}^{i_{a+1}}[L_n,L_{r_1-a}]L_{r_1-a}^{i_a-1}\cdots L_{r_1-1}^{i_1}W^\bj G^\bk v\\
        &+\sum_a L^\bi\cdots W_{r_2-a-1}^{j_{a+1}}[L_n,W_{r_2-a}]W_{r_2-a}^{j_a-1}\cdots W_{r_2-1}^{j_1}G^\bk v\\
        &+\sum_a L^\bi W^\bj\cdots G_{r_3-a-\frac{1}{2}}^{k_{a+1}}[L_n,G_{r_3-a+\frac{1}{2}}]G_{r_3-a+\frac{1}{2}}^{k_a-1}\cdots G_{r_3-\frac{1}{2}}^{k_1}v.
    \end{align*}
    By the induction hypothesis, we can get $L_nL^\bi W^\bj G^\bk v=0$ for $n>N$. Similarly, it can be verified directly that $W_nL^\bi W^\bj G^\bk v=G_{n-\frac{1}{2}}L^\bi W^\bj G^\bk v=0$ for $n>N$. Therefore, Claim 3 has been proved.

    By Claim 3, we can deduce that $P$ is a restricted module. This completes the proof. 
\end{proof}

Furthermore, we have the following results.
\begin{theorem}\label{thm:isom}
    Let $P$ be a simple restricted $\cS$-module. Suppose that there exists $c_2\neq 0$ such that $C_2 P=c_2 P$. Then there exists $d\in\bN$ and a simple $\cT_d$-module $V$ satisfying the conditions in Theorem~\ref{thm:simple.restricted.module} such that $P\cong\Ind(V)$.
\end{theorem}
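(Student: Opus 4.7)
The plan is to construct a subspace $V \subset P$ which is a simple $\cT_d$-module satisfying conditions~(a),~(b) of Theorem~\ref{thm:simple.restricted.module}, such that the natural $\cS$-homomorphism $\phi: \Ind(V) = \cU(\cS) \otimes_{\cU(\cT_d)} V \to P$, $u \otimes v \mapsto uv$, is an isomorphism; simplicity of $V$ will then follow from simplicity of $P$ together with exactness of $\Ind(-)$.

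First I would locate the correct ``annihilator level.'' For each $t \in \bN$, define
\begin{equation*}
\tilde V^{[t]} = \{\,v \in P : W_i v = G_{i-\frac12}v = 0 \text{ for all } i > t\,\}.
\end{equation*}
Since $P$ is restricted, $\tilde V^{[t]} \neq 0$ for $t$ large, so $t^* := \min\{t \in \bN : \tilde V^{[t]} \neq 0\}$ exists. If $t^* = 0$, condition~(a) becomes $c_2 \neq 0$, which is the standing hypothesis. If $t^* \geq 1$, I would show $W_{t^*}$ acts injectively on $\tilde V^{[t^*]}$ by the following minimality argument: given $v \in \tilde V^{[t^*]}$ with $W_{t^*} v = 0$, the super relation $G_r^2 = r W_{2r}$ and the vanishing $W_{2t^*-1}v = 0$ (which follows from $2t^*-1 \geq t^*$ together with either $v \in \tilde V^{[t^*]}$ or $W_{t^*}v = 0$) give $G_{t^*-\frac12}^2 v = 0$; combining this with $[W_i, G_r] = 0$ and $[G_{i-\frac12}, G_{t^*-\frac12}] = (i+t^*-1)W_{i+t^*-1}$, a bracket-by-bracket check shows $u := G_{t^*-\frac12} v \in \tilde V^{[t^*-1]}$, so $u = 0$ by minimality. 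Then $W_{t^*}v = G_{t^*-\frac12}v = 0$ together with $v \in \tilde V^{[t^*]}$ forces $v \in \tilde V^{[t^*-1]}$, and minimality again yields $v = 0$. Now fix any nonzero $v_0 \in \tilde V^{[t^*]}$ and use restrictedness to pick $d \in \bN$ with $L_j v_0 = 0$ for $j > t^* + d$. Set
\begin{equation*}
V := \{\,v \in \tilde V^{[t^*]} : L_j v = 0 \text{ for } j > t^* + d\,\},
\end{equation*}
which contains $v_0 \neq 0$. A routine check of the $\cS$-brackets confirms $V$ is stable under $\cT_d$, and by construction $V$ satisfies conditions~(a) and (b).

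Next I would show $\phi$ is an isomorphism. Surjectivity is immediate since $\cU(\cS)V$ is a nonzero $\cS$-submodule of the simple module $P$, hence equals $P$. For injectivity I would reuse the degree-reduction argument of Claim~1 in the proof of Theorem~\ref{thm:simple.restricted.module}, whose only inputs are conditions (a),~(b) on $V$ and not simplicity of $V$. Specifically, for any nonzero $w \in \Ind(V)$, repeated application of operators from the list $\{W_{\hk+t^*},\, G_{\hj+t^*-\frac12},\, L_{\ci+t^*+d}\} \subset \cT_d$ dictated by Claim~1 produces a nonzero $w' \in V \subset \Ind(V)$ with $w' = u w$ in $\Ind(V)$ for some $u \in \cU(\cS)$. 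Since $\phi$ restricts to the inclusion $V \hookrightarrow P$, the assumption $\phi(w) = 0$ would force $w' = \phi(w') = u\phi(w) = 0$ in $V$, contradicting $w' \neq 0$. Hence $\phi$ is injective, and therefore an isomorphism.

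Finally, by the PBW theorem $\cU(\cS)$ is free as a right $\cU(\cT_d)$-module, so the functor $\Ind(-)$ is exact. If $V' \subsetneq V$ were a proper nonzero $\cT_d$-submodule, then $\Ind(V') \subsetneq \Ind(V) \cong P$ would be a proper nonzero $\cS$-submodule, contradicting simplicity of $P$. Hence $V$ is simple as a $\cT_d$-module and $P \cong \Ind(V)$, completing the theorem. I expect the hardest step to be the injectivity of $W_{t^*}$ in the super setting: unlike the purely even case of $\mathcal{L}_1$, minimizing an annihilator level over only the $W_i$'s is insufficient, because the odd generator $G_{t^*-\frac12}$ provides an additional pathway back down to $\tilde V^{[t^*-1]}$. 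Defining $t^*$ as the minimum over the joint $(W, G)$-annihilator subspaces and exploiting the super-bracket identity $G_r^2 = rW_{2r}$ are precisely what close this loophole.
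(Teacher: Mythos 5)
Your proof is correct and follows essentially the same strategy as the paper's: locate a nonzero joint annihilator subspace $V$ at a minimal level, verify conditions (a)--(b) of Theorem~\ref{thm:simple.restricted.module}, and obtain $P\cong\Ind(V)$ from surjectivity together with the degree-reduction argument of Claim~1, deducing simplicity of $V$ at the end. The only difference is bookkeeping: the paper minimizes the $W$-annihilation level separately inside a three-index family $N_{\ti,\tj,\tk}$ and simply asserts that $W_t$ acts injectively on $V$, whereas your merged $(W,G)$-level forces the explicit super-bracket argument with $G_{t^*-\frac{1}{2}}$ and $G_r^2=rW_{2r}$ --- which in fact supplies the justification that the paper leaves implicit.
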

\begin{proof}
    For any $\ti,\tj,\tk\in\bZ$, consider the vector space
    \begin{equation*}
        N_{\ti,\tj,\tk}=\{v\in P\ |\ W_iv=(1-\delta_{j,0})G_{j-\frac{1}{2}}v=L_kv=0,\ \ \forall\ i>\ti,j>\tj,k>\tk\}.
    \end{equation*}
    By Proposition~\ref{prop:eq.cods}(5), we know that $N_{\ti,\tj,\tk}\neq0$ for sufficiently large integers $\ti,\tj,\tk$. Note that $W_0v=c_2v\neq 0$ for any nonzero $v\in P$. Thus we can find a smallest nonnegative integer $a$, and choose some $b,c\in\bN$ with $b,c\ge a$ such that $N_{a,b,c}\neq 0$. Moreover, we can choose $a=b$ as in Theorem~\ref{thm:simple.restricted.module}.
    
    Denote $d=c-a$ and $V=N_{a,a,a+d}$. For any $i>a,j>a,k>a+d$, by \eqref{e.rel}, we have 
    \begin{align*}
        &W_i(G_{n-\frac{1}{2}}v)=0,\ \ G_{j-\frac{1}{2}}(G_{n-\frac{1}{2}}v)=(j+n-1)W_{j+n-1}v=0,\\ 
        &L_k(G_{n-\frac{1}{2}}v)=(n-\frac{1}{2})G_{k+n-\frac{1}{2}}v=0,
    \end{align*}
    for any $v\in V$, $n\ge 1$. Thus, $G_{n-\frac{1}{2}}v\in V$. Similarly, we can also get $W_{m-d}v\in V$ and $L_m v\in V$ for all $m\in\bN$. Therefore, $V$ is an $\cT_d$-module.

    By the definition of $V$, we can deduce that the action of $W_t$ on $V$ is injective, for some $t\in\bN$. Since $P$ is simple and generated by $V$, then there exists a canonical surjective map
    \begin{equation*}
        \pi:\Ind(V)\longrightarrow P,\ \ \pi(1\otimes v)=v,\ \ \forall\ v\in V.
    \end{equation*}
    To prove $\pi$ is bijective, we only need to show that it is injective. Let $K=\ker(\pi)$. Obviously, $K\cap V=0$. If $K\neq 0$, we can choose a nonzero vector $v\in K\setminus V$ such that $\deg(v)=(\bi,\bj,\bk)$ is minimal possible. Note that $K$ is an $\cS$-submodule of $\Ind(V)$ and hence is stable under the actions of $L_i,W_i$ and $G_{i-\frac{1}{2}}$ for all $i\in\bZ$. By the similar proof of Claim 1, we can create a new vector $u\in K$ with $\deg(u)\prec(\bi,\bj,\bk)$, which is a contradiction. Thus we have $K=0$, that is $P\cong\Ind(V)$. By the property of induced modules, we know $V$ is a simple $\cT_d$-module.
\end{proof}

As a mater of fact, any simple restricted $\cS$-module satisfying conditions in Theorem~\ref{thm:isom} is determined by some simple module $V$ over a certain subalgebra $\cT_d$. The conditions of Theorem~\ref{thm:simple.restricted.module} imply that $V$ can be viewed as a simple module over some finite-dimensional solvable quotient algebra of $\cT_d$. So the study of such modules over $\cS$ can be reduced to the study of simple modules over the corresponding finite-dimensional algebras. More precisely, we have the following conclusions.
\begin{theorem}\label{thm:quo.alg.mod}
    Let $P$ be a simple restricted $\cS$-module. Suppose that there exists $k\in\bZ_+$ such that the action of $W_k$ on $P$ is injective or there exists $c_2\neq 0$ such that $C_2 P=c_2 P$. Then $P\cong\Ind(V)$, where $V$ is a simple $\fq^{(d,t)}$-module, and $\mathfrak{q}^{(d,t)}=\cT_d/\cS^{(t+d+1,t+1,t+1)}$ is a quotient algebra for some $t\in\bN$.
\end{theorem}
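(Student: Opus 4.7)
The plan is to deduce Theorem~\ref{thm:quo.alg.mod} from Theorem~\ref{thm:isom} by slightly enlarging the case analysis and then passing to a finite-dimensional quotient. First I would establish a presentation $P\cong\Ind(V)$ for some simple $\cT_d$-module $V$ satisfying the hypotheses of Theorem~\ref{thm:simple.restricted.module}. When $c_2\neq 0$ this is exactly Theorem~\ref{thm:isom}. For the alternative case, assume $W_k$ acts injectively on $P$ for some $k\in\bZ_+$. Since $P$ is restricted, Proposition~\ref{prop:eq.cods}(5) ensures the subspaces $N_{\ti,\tj,\tk}$ (defined as in the proof of Theorem~\ref{thm:isom}) are nonzero for large enough indices. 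I would set $t=k$, take the minimal $d\in\bN$ for which $V:=N_{k,k,k+d}\neq 0$, and verify by the same bracket computations as in the proof of Theorem~\ref{thm:isom} that $V$ is stable under $\cT_d$. Injectivity of $W_k$ on $P$ transfers immediately to $V$, so the hypotheses (a) and (b) of Theorem~\ref{thm:simple.restricted.module} hold with $t=k>0$. Simplicity of $P$ makes the canonical map $\pi:\Ind(V)\to P$ surjective, while the degree-monotonicity argument of Claim~1 in the proof of Theorem~\ref{thm:simple.restricted.module} shows $\pi$ is injective; simplicity of $V$ as a $\cT_d$-module is then inherited via the standard characterization of induced simple modules.

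Next, I would verify that $\cS^{(t+d+1,t+1,t+1)}$ is a graded ideal of $\cT_d$ that annihilates $V$. Annihilation is clear: the definition of $V$ gives $W_i V=0$ for $i\geq t+1$ and $L_j V=0$ for $j\geq t+d+1$, while Theorem~\ref{thm:simple.restricted.module}(i) forces $G_{i-1/2}V=0$ for $i\geq t+1$. The ideal property is a straightforward case-check in the bracket relations \eqref{e.rel}; the one point where care is needed is the bracket $[G_{a-1/2},G_{b-1/2}]=(a+b-1)W_{a+b-1}+\delta_{a+b-1,0}C_2$ for $a\geq 1$ (from $\cT_d$) and $b\geq t+1$ (from the ideal), which lands in the required subspace since $a+b-1\geq t+1$ and the central $C_2$ term vanishes for the same reason. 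Thus $V$ factors through $\fq^{(d,t)}=\cT_d/\cS^{(t+d+1,t+1,t+1)}$ as a simple module.

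The main obstacle is the $W_k$-injective case of the first step, where one must simultaneously verify $\cT_d$-stability of $V$ and injectivity of $\pi$ while assuming only injectivity of $W_k$ rather than of $W_0$. Both reductions are bracket computations of exactly the same shape as those already used in Theorem~\ref{thm:isom}, so the work is careful bookkeeping rather than a genuinely new idea. Finite-dimensionality of $\fq^{(d,t)}$ is immediate from the explicit basis $\{L_0,\dots,L_{t+d},W_{-d},\dots,W_t,G_{1/2},\dots,G_{t-1/2},C_1,C_2\}$, and solvability is inherited: the images of the $W_i$'s, $G_{i-1/2}$'s, and $C_i$'s span a super-abelian ideal (since $[W,W]=[W,G]=0$ and $[G,G]\subseteq W$), and the further quotient is the finite-dimensional solvable Lie algebra $\bigoplus_{i=0}^{t+d}\bC L_i$ with truncated brackets $[L_i,L_j]=(j-i)L_{i+j}$.
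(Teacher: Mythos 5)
Your overall route is the one the paper intends (the paper itself only sketches this theorem in the paragraph preceding it): obtain $P\cong\Ind(V)$ for a simple $\cT_d$-module $V$ satisfying conditions (a) and (b) of Theorem~\ref{thm:simple.restricted.module}, then check that $\cS^{(t+d+1,t+1,t+1)}$ is an ideal of $\cT_d$ annihilating $V$, so that $V$ descends to a simple module over $\fq^{(d,t)}$. Your second step --- annihilation of $V$ via condition (b) together with Theorem~\ref{thm:simple.restricted.module}(i), the ideal verification including the $[G_{a-\frac12},G_{b-\frac12}]$ bracket, and the finite-dimensionality and solvability of the quotient --- is correct and is exactly what the paper leaves implicit.

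There is, however, a concrete gap in your treatment of the $W_k$-injective case: you set $t=k$ and take $V=N_{k,k,k+d}$, but nothing guarantees that $N_{k,k,c}\neq 0$ for some $c$. Injectivity of $W_k$ on $P$ only yields that $N_{a,b,c}=0$ whenever $a<k$; it does not rule out that every nonzero vector killed by all $W_i$ with $i>k$ is itself zero. For instance, if some $W_{k'}$ with $k'>k$ also acts injectively on $P$, then $N_{k,b,c}=0$ for all $b,c$, your $V$ is the zero space, and condition (b) cannot be arranged with $t=k$. The repair is the device already used in the proof of Theorem~\ref{thm:isom}: let $t$ be the \emph{smallest} nonnegative integer $a$ for which $N_{a,b,c}\neq 0$ for some $b,c\ge a$ (such $a$ exists by Proposition~\ref{prop:eq.cods}(5)); the observation above gives $t\ge k\ge 1$, and minimality of $t$ forces $W_t$ to act injectively on $V$, since a nonzero vector of $V$ annihilated by $W_t$ would lie in $N_{t-1,b,c}$. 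With this choice of $t$ (rather than $t=k$) conditions (a) and (b) hold, and the remainder of your argument goes through unchanged.
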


However, the classification of simple $\fq^{(d,t)}$-modules remains an open problem, as far as we know, except when $(d,t)=(0,0)$. Note that $\fq^{(0,0)}$ is the Lie algebra $\cS_0$ and the classification of simple $\cS_0$-modules is due to an important work by R. Block \cite{Bl}. It follows from Theorem~\ref{thm:quo.alg.mod} that the simple generalized Verma modules for $\cS$ can be classified as follows.

\begin{corollary}\label{cor:gen.ver.mod}
    Every simple generalized Verma module for $\cS$ with $c_2\neq0$ is isomorphic to an induced module $\Ind(V)$, where $V$ is one of the $\cS_0$-modules including simple cuspidal modules, simple highest/lowest weight modules, simple (dual) Whittaker modules and simple Block modules.
\end{corollary}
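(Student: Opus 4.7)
The plan is to deduce the corollary from Theorem~\ref{thm:quo.alg.mod} by pinning down, in the case of a generalized Verma module, the parameters $(d,t)$ appearing there, and then invoking Block's classification \cite{Bl} of simple modules over the resulting four-dimensional Lie algebra.

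First I would verify that any generalized Verma module $M_\varphi(V)$ is restricted. By construction every element of the canonical generating space $1\otimes V$ is annihilated by the finitely generated subalgebra $\cS^{(1,1,1)}$, since $\cS_+$ acts trivially on $V$ (and $\cS^{(1,1,1)}$ coincides with $\cS_+$ under our indexing conventions). This is precisely condition (5) of Proposition~\ref{prop:eq.cods}, so the implication $(5)\Rightarrow(6)$ of that proposition shows $M_\varphi(V)$ is restricted. Combined with the hypothesis $c_2\neq 0$, Theorem~\ref{thm:quo.alg.mod} applies and yields $M_\varphi(V)\cong\Ind(V')$ for some simple $\fq^{(d,t)}$-module $V'$, where $\fq^{(d,t)}=\cT_d/\cS^{(t+d+1,t+1,t+1)}$.

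Next I would pin down the parameters $(d,t)$. Recall that the subspace $N_{\ti,\tj,\tk}$ defined in the proof of Theorem~\ref{thm:isom} consists of those vectors annihilated by all $W_i$, $G_{j-\frac{1}{2}}$ and $L_k$ with $i>\ti$, $j>\tj$, $k>\tk$. Because $\cS^{(1,1,1)}(1\otimes V)=0$, the generator already lies in $N_{0,0,0}$, so the minimal choices in the argument of Theorem~\ref{thm:isom} are $a=b=0$ and $c=0$, whence $t=a=0$ and $d=c-a=0$. For these parameters $\fq^{(0,0)}=\cT_0/\cS^{(1,1,1)}$ is canonically identified with $\cS_0=\bC L_0\oplus\bC W_0\oplus\bC C_1\oplus\bC C_2$, so $V'$ becomes a simple $\cS_0$-module on which $C_2$ acts by the scalar $c_2\neq 0$. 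Comparing generating subspaces in the isomorphism $M_\varphi(V)\cong\Ind(V')$ then identifies $V'\cong V$ as $\cS_0$-modules.

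Finally, I would invoke Block's theorem. The Lie algebra $\cS_0$ has the single non-trivial bracket $[L_0,W_0]=C_2$ with $C_1,C_2$ central, so on any module where $C_2$ acts by a non-zero scalar it is, up to the extra central summand $\bC C_1$, essentially the three-dimensional Heisenberg algebra. The simple modules of such algebras are classified in \cite{Bl} into exactly the four families stated: simple cuspidal modules, simple highest/lowest weight modules, simple (dual) Whittaker modules, and simple Block modules, which gives the conclusion. The only subtle step I foresee is the reduction $(d,t)=(0,0)$: one must be sure that the minimality argument of Theorem~\ref{thm:isom} really permits this choice in the generalized Verma setting, and this in turn rests on the two observations $1\otimes V\subseteq N_{0,0,0}$ and $c_2\neq 0$, which together guarantee $N_{0,0,0}\neq 0$ and make condition (a) of Theorem~\ref{thm:simple.restricted.module} available at $t=0$.
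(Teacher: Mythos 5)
Your proposal is correct and follows essentially the same route the paper intends: the paper gives no separate proof of Corollary~\ref{cor:gen.ver.mod}, deriving it directly from Theorem~\ref{thm:quo.alg.mod} together with the observation that a generalized Verma module is restricted, that the relevant quotient is $\fq^{(0,0)}=\cT_0/\cS^{(1,1,1)}\cong\cS_0$, and that simple $\cS_0$-modules with $C_2$ acting by a nonzero scalar are classified by Block \cite{Bl}. Your extra care in pinning down $(d,t)=(0,0)$ via the minimality argument of Theorem~\ref{thm:isom} (using $1\otimes V\subseteq N_{0,0,0}$ and condition (a) at $t=0$ with $c_2\neq0$) just makes explicit what the paper leaves implicit.
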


Recall that the even subalgebra of $\cS$ is a subalgebra of the $\lambda=1$ Ovsienko--Roger algebra $\mathcal{L}_1$. Therefore, it is not difficult to write down the more explicit definitions of $\cS_0$-modules in Corollary~\ref{cor:gen.ver.mod} by referring to \cite[Corollary~4.4]{LPX22}. We leave them to interested readers. 

\section{Examples of restricted $\cS$-modules}\label{sect:example}
\hspace{1.5em}We are now in a position to give some examples of $\cT_d$-modules. Then, by Theorem~\ref{thm:simple.restricted.module}, we can construct (simple) $\cS$-modules.

\subsection{Verma module}
\hspace{1.5em}First, we give the example of the Verma module $M(h_1,h_2,c_1)$, which shows that it is always reducible. For $h_1,h_2,c_1\in\bC$, let $\bC v$ be a one-dimensional $\cS_0$-module defined by 
\begin{equation*}
    L_0v=h_1v,\ \ W_0v=h_2v,\ \ C_1v=c_1v,\ \ C_2v=0. 
\end{equation*}
Let $\cS_+$ act trivially on $v$, which makes $\bC v$ to be an $(\cS_0\oplus\cS_+)$-module. Then we get Verma module $ M(h_1,h_2,c_1)=\cU(\cS)\otimes_{\cU(\cS_0\oplus\cS_+)}\bC v$ of $\cS$. Consider the vector $G_{-\frac{1}{2}}v\in M(h_1,h_2,c_1)$, which is nonzero by the construction of the Verma module. Moreover, we have
\begin{align*}
    &L_m(G_{-\frac{1}{2}}v)=[L_m,G_{-\frac{1}{2}}]v+G_{-\frac{1}{2}}L_m v=0,\ \ \forall\ m>0,\\
    &W_m(G_{-\frac{1}{2}}v)=[W_m,G_{-\frac{1}{2}}]v+G_{-\frac{1}{2}}W_mv=0,\ \ \forall\ m>0,\\
    &G_r(G_{-\frac{1}{2}}v)=[G_r,G_{-\frac{1}{2}}]v-G_{-\frac{1}{2}}G_rv=0,\ \ \forall\ r\in\bN+\frac{1}{2},\\
    &L_0(G_{-\frac{1}{2}}v)=[L_0,G_{-\frac{1}{2}}]v+G_{-\frac{1}{2}}L_0 v=(h_1-\frac{1}{2})G_{-\frac{1}{2}}v,\\
    &W_0(G_{-\frac{1}{2}}v)=[W_0,G_{-\frac{1}{2}}]v+G_{-\frac{1}{2}}W_0v=h_2G_{-\frac{1}{2}}v.
\end{align*}
Hence, $G_{-\frac{1}{2}}v$ is a singular vector. Since the weight $h_1-\frac12$ of $G_{-\frac{1}{2}}v$ is strictly lower than the weight $h_1$ of $M(h_1,h_2,c_1)$, the submodule $\bC G_{-\frac{1}{2}}v$ is proper. Therefore, $M(h_1,h_2,c_1)$ is reducible. Note that $M(h_1,h_2,c_1)$ corresponds to the case $t=d=0$ in Theorem~\ref{thm:simple.restricted.module}, but it does not satisfy the condition $c_2\neq0$.

\begin{remark}
    This together with Corollary~\ref{cor:gen.ver.mod} describes completely the generalized Verma modules over $\cS$ with $c_2\neq 0$. Different from $\mathcal{L}_1$ in \cite[Corollary~4.4]{LPX22}, the Verma module of $\cS$ is always reducible.
\end{remark}

\subsection{Whittaker module}
\hspace{1.5em}For $k\in\bZ_+$, let $\psi_k$ be a Lie superalgebra homomorphism $\psi_k:\cS^{(k)}\rightarrow\bC$ with $\psi_k(C_1)=c_1$, $\psi_k(C_2)=c_2$. It follows that $\psi_k(L_m)=\psi_k(W_n)=\psi_k(G_{r+\frac{1}{2}})=0$, for $m\ge2k+1$, $n\ge2k$ and $r\ge k$. Let $\bC w$ be a one-dimensional vector space with
\begin{equation*}
    xw=\psi_k(x)w,\ \ C_1w=c_1w,\ \ C_2w=c_2w,\ \ \forall\ x\in\cS^{(k)}.
\end{equation*}
If $\psi_k(W_{2k-1})\neq0$, then $\bC w$ is a simple $\cS^{(k)}$-module. Now we set $\psi_k(W_{2k-1})\neq 0$ and consider the induced module
\begin{equation*}
   V_{\psi_k}=\cU(\cS_0\oplus\cS_+)\otimes_{\cU(\cS^{(k)})}\bC w.
\end{equation*}
It is easy to check that $V_{\psi_k}$ is a simple $(\cS_0\oplus\cS_+)$-module. Then the corresponding simple $\cS$-module $\Ind(V_{\psi_k})$ in Theorem~\ref{thm:simple.restricted.module} (here we take $t=2k-1$, $d=1$) is exactly the Whittaker module.

\footnotesize

\end{document}